\newtheorem{theorem}{Theorem}
\newtheorem{remark}{Remark}
\newtheorem{lemma}{Lemma}
\theoremstyle{definition}
\newtheorem{assump}{Assumption}
\newtheorem*{remark*}{Remark}
\crefname{assump}{assumption}{assumptions}
\newcommand{\verteq}{\rotatebox{90}{$\,=$}}
\newcommand{\vertleq}{\rotatebox{90}{$\,\leq$}}
\begin{document}
\begin{frontmatter}

\pretitle{Research Article}

\title{Minimax identity with robust utility functional for a non-concave utility}
\author[a]{\inits{O.}\fnms{Olena}~\snm{Bahchedjioglou}
	\thanksref{cor1}
	\ead[label=e1]{olenabahchedjioglou@gmail.com}\orcid{0000-0002-4801-8766}}
\thankstext[type=corresp,id=cor1]{Corresponding author.}

\author[b]{\inits{G.}\fnms{Georgiy}~\snm{Shevchenko}%\thanksref{f1}
	\ead[label=e2]{gshevchenko@kse.org.ua}}
%\thankstext[id=f1]{Some remarks.}

\address[a]{\institution{Taras Shevchenko National University of Kyiv} \\
	Volodymyrska str., 01033 Kyiv, \cny{Ukraine}}

\address[b]{Kyiv School of Economics, 3 Mykoly Shpaka,\\
	03113 Kyiv, Ukraine}

%\dedicated{Dedicated to}

\markboth{O. Bahchedjioglou, G. Shevchenko}{Minimax identity with robust utility functional for a non-concave utility}

\begin{abstract}
	We study the minimax identity for a non-decreasing upper-semicontinuous utility function satisfying mild growth assumption. In contrast to the classical setting, we do not impose the assumption that the utility function is concave. 
	By considering the concave envelope of the utility function we obtain equalities and inequalities between the robust utility functionals of an initial utility function and its concavification. Furthermore, we prove similar equalities and inequalities in the case of implementing an upper bound on the final endowment of the initial model.
\end{abstract}

\begin{keywords}
	\kwd{Minimax identity}
	\kwd{robust utility functionals}
	\kwd{non-concave utility}
	\kwd{constrained optimization}
	%\kwd{\LaTeX}
\end{keywords}

\begin{keywords}[MSC2010]%
	\kwd{91G10}
	\kwd{90C26}
	\kwd{91B16}
	\kwd{47N10}
	\kwd{49J35}
\end{keywords}

\end{frontmatter}

\section{Introduction}
Consider a complete market model framework with unique equivalent local martingale measure $Q^e$.
In the spirit of Reichlin \cite{Reichlin11}, we consider a utility function $U$ on $\mathbb{R}_+$ which is non-decreasing upper-semicontinuous and satisfying a mild growth condition.  Schied and Wu \cite{Schied-Wu} impose the assumptions below on the set of probability measures $\mathcal{Q}$ on $(\Omega, \mathcal{F})$; note that $\mathcal{Q}$ is not the set of all measures on the measurable space $(\Omega, \mathcal{F}),$ but just a subset satisfying these assumptions.
\begin{assump}\label{ass1}%\hypertarget{ass1}{}
	\begin{itemize}
		\item[(i)] $\mathcal{Q}$ is convex;
		\item[(ii)] $\mathbb{P}[A]=0$ if and only if $Q[A]=0$ for all $Q \in \mathcal{Q};$
		\item[(iii)] The set $\mathcal{Z}:=\{dQ/d\mathbb{P}|Q\in \mathcal{Q}\}$ is closed in $L^{0}(\mathbb{P}).$
	\end{itemize}
Also, to the \Cref{ass1} we add
\begin{itemize}
	\item[\textit{(iv)}] \textit{The set $ \mathcal{Z}_e:=\{dQ/dP|Q\in \mathcal{Q}_e\}$  is closed in $ L^{0}(\mathbb{P}),$}
\end{itemize}
where $\mathcal{Q}_e$ denotes the set of measures in $\mathcal{Q}$ that are equivalent to $\mathbb{P}$.
\end{assump}

In this paper we study the minimax identity for the robust non-concave utility functional in a complete market model, i.e.
\begin{gather*}
u(x):=\sup\limits_{X \in \mathcal{X}(x)}\inf\limits_{Q \in \mathcal{Q}}E_{Q}[U(X)]=\inf\limits_{Q \in \mathcal{Q}}\sup\limits_{X \in \mathcal{X}(x)}E_{Q}[U(X)].
\end{gather*}
while considering two possibilities for the set $\mathcal{X}(x)$ of admissible final endowments: 
\begin{itemize}
	\item the standard budget constraint:
	$$\mathcal{X}(x) = \{X \in L^{1}_+(Q^e)| \mathrm{E}_{Q^e}[X]\le x\}, x>0,$$
	\item an additional upper bound: 
	\begin{equation}\label{upperbnd}
	\mathcal{X}^W(x) = \{X \in L^{1}(Q^e) \mid 0 \leq X \leq W,  \mathrm{E}_{Q^e}[X]\leq x\},
	\end{equation}
	with some random variable $W : \Omega \rightarrow [0, +\infty ).$
\end{itemize}

One of the key tasks of financial mathematics is proving the existence as well as the construction of the optimal investment strategies,  in other words, finding the utility-maximizing investment strategies.
%\begin{gather*}
%\sup\limits_{X \in \mathcal{X}(x)}\mathbb{E}[U(X)].
%\end{gather*}
Mostly, this problem was studied under the assumption that the probability measure which accurately describes value process development is known. 

However, in reality, not only the exact probabilities are unknown, but there are abundant aspects that can be considered in mentioned maximization problems such as the completeness of the market, the set of prior probability measures, the assumptions on investor's utility function, the modeling of payoff and so on. That is why instead of a single measure it is sound to consider the set of probability measures with natural assumptions on it. Thus, the standard utility maximization problem is transformed into the robust utility maximization, i.e.
\begin{gather*}
\sup\limits_{X \in \mathcal{X}(x)}\inf\limits_{Q \in \mathcal{Q}}\mathrm{E}_{Q}[U(X)],
\end{gather*} where one maximizes the expected utility under the infimum over the whole set of probability measures, for details see  Gilboa and Schmeidler \cite{Gilboa1987}, \cite{Gilboa2004}, \cite{Schmeidler1989}, Yaari \cite{yaari}, F\"{o}llmer and Schied \cite[Section
2.5]{Fol}.

In the case of a standard utility maximization problem it is possible to construct the optimal investment strategy given a strictly concave utility function, see F\"{o}llmer and Schied \cite[Section 2.5]{Fol}, and for the general case of utility functions see Bahchedjioglou and Shevchenko \cite{paper1}. Both references considered standard budget constraints as well as additional upper bound on the final endowments.  For a detailed survey of this problem in general model setup in both complete and incomplete market models but with risk-averse agent, see Biagini \cite{Biagini}.

In this paper, we consider the robust maximization problem with the general case of non-concave utility function likewise with and without budget constraints. 
In the previous literature different approaches were used for robust portfolio optimization such as reducing the robust case to the standard one through proving the existence of the ``worst-case scenario measure'' or ``the least favourable measure'', e.g. \cite{quenez, Schied}, a stochastic control approach, see \cite{Hernandez}, an approach using BSDEs, see \cite{Bordigoni} and references therein.

Besides, for solving the optimal investment problems one can make use of the following interim finding such as minimax identity and duality theory.
Using the minimax identity for the concave functions, see \cite[section 6]{Aubin84}, Schied and Wu \cite{Schied-Wu} showed the existence of optimal probability measure $\widehat{Q}$, in the sense that $$\sup\limits_{X \in \mathcal{X}(x)}\inf\limits_{Q \in \mathcal{Q}}\mathrm{E}_{Q}[U(X_T)]=
\sup\limits_{X \in \mathcal{X}(x)} \mathrm{E}_{\widehat{Q}}[U(X_T)],$$ which, together with the results of the Kramkov and Schachermayer \cite{Kramkov99,Kramkov03}, were the base for proving the existence of the optimal investment strategy. They used a general incomplete market model with rather natural assumptions on the set of probability measures.
Backhoff Veraguas and Fontbona \cite{backfont} extended these results by implementing the assumption on the densities of the uncertainty set instead of the usual compactness assumption. Moreover, they have done this without relying on the existence of the worst-case measure or on any assumption implying this. For more results concerning the robust utility maximization problem we refer to Bartl, Kupper and Neufeld \cite{Bartl2021} and references therein.

The majority of articles on utility maximization assume that the investor's utility function is strictly concave, strictly increasing, continuously differentiable, and satisfies the Inada conditions. While the assumption of monotonicity is natural, since an agent prefers more wealth to less, other assumptions can be omitted or relaxed. 
There is a wide class of models in which the non-concave and not necessarily continuously differentiable utility function maximization has been studied by reducing the problem to the concave case. One of the most important works was done by Reichlin \cite{PHD,Reichlin11}. He considered the general framework of the
non-concave utility functions for both complete and incomplete market models. By applying the concavification technique he established valuable relations between the maximization problems for a non-concave utility function $U$ and its concavification $U_c$ thereby reducing the task to the concave problem. Moreover, Reichlin proved the existence of the maximizer under certain assumptions and established its properties.

While considering two cases of admissible final endowments: the standard budget constraint and additional upper bound (which has not been considered before in such model setup) we extend Reichlin's results by proving new connections in the form of equalities and inequalities of the robust utility maximization functionals of initial non-concave utility functions and its concavification. Furthermore, we proceed in proving the minimax identity for the general case of non-concave utility functions. The crucial step for obtaining the mentioned results with implementing an additional upper bound is the use of the regular conditional distribution which sheds new light on the possible approaches for solving the optimization problem.

The paper is organized as follows. In \Cref{general} we study the minimax identity for a non-concave utility function in the complete market model. We do not prove nor refute the minimax identity, however, we show useful equalities and inequalities to relate the robust utility functional of initial utility functional and its concavification. \Cref{bound} is devoted to the study of the minimax identity under the implementation of budget constraints. The results of \Cref{bound} are similar to the corresponding results of \Cref{general}, however, some of proves differ significantly.

Throughout the paper the measurability of real-valued functions we will understand in the Borel sense.

\section{Minimax identity for non-concave utility functions in complete market model}\label{general}

This problem is already solved in \cite{Bul2019}, but, since we want to expand this problem by considering budget constraints we present the main part of the mentioned paper omitting the proofs.

\subsection{Formulation of the problem}%\hypertarget{model}{}
To formulate the goal of this paper first let us remind some notations.
For any initial capital $x>0$, let $\mathcal{X}(x)$ be the set of all possible random endowments corresponding to $x$, i.e.\ all random variables $X$  
%with $X \leq x$ 
such that $X \geq 0$, $\mathrm{E}_{Q^e}[X]\le x.$ 

Moreover, we consider a utility function $U$ which is non-decreasing, upper-semicontinuous, defined on a domain %containing
$(0,\infty)$ and satisfying the mild growth condition:
\begin{gather*}
\lim\limits_{x \rightarrow \infty}\frac{U(x)}{x}=0.
\end{gather*}

It follows from \cite[Proposition 3.1]{Aum} that $U(x)$ has a non-decreasing and continuous concave envelope $U_c(x)$, or the smallest concave function such that $U_c(x)\geq U(x)$ for all $x \in \mathbb{R}$; we will call it a \textit{concavification} of $U$.

This section aims is to prove some equalities and inequalities, related to the minimax identity for the robust non-concave utility functionals:
\begin{gather*}
\sup\limits_{X \in \mathcal{X}(x)}\inf\limits_{Q \in \mathcal{Q}}E_{Q}[U(X)]=\inf\limits_{Q \in \mathcal{Q}}\sup\limits_{X \in \mathcal{X}(x)}E_{Q}[U(X)].
\end{gather*}

We will assume that the probability space $(\Omega, \mathcal{F}, \mathbb{P})$ is atomless. 
Introduce the notation:
\begin{gather*}
u^c(x):=\sup\limits_{X \in \mathcal{X}(x)}\inf\limits_{Q \in \mathcal{Q}}E_{Q}[U_c(X)];\\
u_{Q}(x):=\sup\limits_{X \in \mathcal{X}(x)}E_{Q}[U(X)];\\
u_{Q}^c(x):=\sup\limits_{X \in \mathcal{X}(x)}E_{Q}[U_c(X)].
\end{gather*}

Also, we need the finiteness of value functions, which we can write as 
\begin{assump}\label{finiteu}
	\begin{gather*}
	\text{For all } x>0 \text{ exists a measure } Q_0\in \mathcal{Q}_e \text{ such that } u_{Q_0}(x)<\infty.
	\end{gather*}
\end{assump}
\begin{assump}\label[assump]{finit}
	\begin{gather*}
	u^{c}_{Q_0}(x)<\infty \text{ for some, and hence for all } x>0 \text{ and some } Q_0\in \mathcal{Q}_e.
	\end{gather*}
\end{assump}
Note, that finiteness of $u^c_Q(x)$ implies finiteness of $u_Q(x),$ since $u_Q(x)\leq u^c_Q(x).$

\begin{theorem}\label{mt}
	Suppose that Assumptions \ref{ass1}, \ref{finiteu}, \ref{finit} hold and that the probability space $(\Omega, \mathcal{F}, \mathbb{P})$ is atomless.
	
	Then the following holds
	
\small	
	\begin{tabular}{c@{\hskip 0.001cm}c@{}c@{\hskip 0.001cm}c@{}c}\hypertarget{star}{}
		$\sup\limits_{X \in \mathcal{X}(x)}\inf\limits_{Q \in\mathcal{Q}_e}E_{Q}[U_c(X)]$ & $\overset{(1\star)}{=}$ & $\sup\limits_{X \in \mathcal{X}(x)}\inf\limits_{Q \in \mathcal{Q}}E_{Q}[U_c(X)]$ & $\overset{(2\star)}{=}$ & $\inf\limits_{Q \in \mathcal{Q}}\sup\limits_{X \in \mathcal{X}(x)}E_{Q}[U_c(X)]$\\
		$\vertleq(4\star)$ & & & & $\verteq(3\star)$\\
		$\sup\limits_{X \in \mathcal{X}(x)}\inf\limits_{Q \in \mathcal{Q}_e}E_{Q}[U(X)] $ & & & &
		$\inf\limits_{Q \in \mathcal{Q}_e}\sup\limits_{X \in \mathcal{X}(x)}E_{Q}[U_c(X)]$\\
		$\verteq(6\star)$ & & & & 	$\verteq(5\star)$\\
		$\sup\limits_{X \in \mathcal{X}(x)}\inf\limits_{Q \in \mathcal{Q}}E_{Q}[U(X)]$ & $\overset{(7\star)}{\leq}$ & $\inf\limits_{Q \in \mathcal{Q}}\sup\limits_{X \in \mathcal{X}(x)}E_{Q}[U(X)]$ & $\overset{(8\star)}{\leq}$ &
		$\inf\limits_{Q \in \mathcal{Q}_e}\sup\limits_{X \in \mathcal{X}(x)}E_{Q}[U(X)]$
	\end{tabular}
\end{theorem}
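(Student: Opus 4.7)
The eight relations split naturally by difficulty, so I would dispose of the trivial ones first and then tackle the two substantive identities. The easy relations are $(7\star)$, $(8\star)$, $(4\star)$: $(7\star)$ is the classical weak minimax inequality $\sup\inf \le \inf\sup$; $(8\star)$ follows from the inclusion $\mathcal{Q}_e \subseteq \mathcal{Q}$, which can only enlarge an infimum upon restriction; $(4\star)$ follows from $U \le U_c$ pointwise, which is preserved by $E_Q$ and then by $\inf_Q$ and $\sup_X$.

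The classical minimax $(2\star)$ concerns only the concavification $U_c$, which is concave, non-decreasing, upper-semicontinuous (in fact continuous) and inherits the mild growth $U_c(x)/x \to 0$ from $U$. Applying the Schied--Wu minimax theorem of \cite{Schied-Wu} to $U_c$ under \Cref{ass1} and the finiteness \Cref{finit} then yields $(2\star)$ directly.

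For the three restriction identities $(1\star)$, $(3\star)$, $(6\star)$ I would use the standard convex-combination density trick. Fix $Q_0 \in \mathcal{Q}_e$ supplied by \Cref{finiteu} and \Cref{finit}. For any $Q \in \mathcal{Q}$ and $\varepsilon \in (0,1)$, set $Q_\varepsilon := (1-\varepsilon)Q + \varepsilon Q_0$; then $Q_\varepsilon \in \mathcal{Q}$ by convexity (\Cref{ass1}(i)), and $Q_\varepsilon$ is equivalent to $\mathbb{P}$ because $Q_0$ is, so $Q_\varepsilon \in \mathcal{Q}_e$. Linearity gives $E_{Q_\varepsilon}[V] = (1-\varepsilon)E_Q[V] + \varepsilon E_{Q_0}[V]$ for any $V \ge 0$. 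Taking $V = U_c(X)$ (resp. $V = U(X)$) and sending $\varepsilon \to 0^+$ shows $\inf_{Q'\in\mathcal{Q}_e} E_{Q'}[V] \le \inf_{Q\in\mathcal{Q}} E_Q[V]$; combined with the trivial reverse inequality from $\mathcal{Q}_e \subseteq \mathcal{Q}$ one gets equality, whence $(1\star)$ and $(6\star)$. The same convex combination applied to the outer problem, together with the bound $\sup_X E_{Q_\varepsilon}[U_c(X)] \le (1-\varepsilon)u^c_Q(x) + \varepsilon u^c_{Q_0}(x) \to u^c_Q(x)$ (finite by \Cref{finit}), yields $(3\star)$.

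The real work is in $(5\star)$: $\inf_{Q \in \mathcal{Q}_e}\sup_X E_Q[U_c(X)] = \inf_{Q \in \mathcal{Q}_e}\sup_X E_Q[U(X)]$. The inequality $\ge$ is immediate from $U \le U_c$. For the reverse it suffices to show, for each fixed $Q \in \mathcal{Q}_e$, that $\sup_X E_Q[U_c(X)] \le \sup_X E_Q[U(X)]$. Given $X \in \mathcal{X}(x)$ and $\omega$ with $U_c(X(\omega)) > U(X(\omega))$, the concavification $U_c$ is affine on a maximal open interval $(a(\omega), b(\omega))$ containing $X(\omega)$, with $U = U_c$ at the endpoints and $U_c(X) = \lambda(X)U(a(X)) + (1-\lambda(X))U(b(X))$ for $\lambda(X) = (b(X)-X)/(b(X)-a(X))$. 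Using the atomlessness of $\mathbb{P}$ (hence of every measure equivalent to $\mathbb{P}$), one constructs a randomized endowment $\tilde X$ equal to $a(X)$ on a measurable set of conditional probability $\lambda(X)$ given $X$ and to $b(X)$ on its complement, simultaneously under both $Q^e$ and $Q$. Then $E_{Q^e}[\tilde X \mid X] = X$, so $\tilde X \in \mathcal{X}(x)$, and $E_Q[U(\tilde X)] = E_Q[U_c(X)]$; taking $\sup$ over $X$ yields $(5\star)$. The main obstacle, and the point at which I expect the most care, is the measurable construction of $a, b, \lambda$ and the randomizing partition; the standard route, used in \cite{Reichlin11} and \cite{Bul2019}, is a measurable selection combined with a regular conditional distribution on the atomless underlying space. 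Chaining the eight established relations along the diagram then closes the proof.
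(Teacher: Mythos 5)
Your decomposition of the diagram coincides with the paper's: $(4\star)$, $(7\star)$, $(8\star)$ are dispatched trivially; $(1\star)$--$(3\star)$ come from the minimax identity for the concavified functional (\Cref{Lemma4}); $(6\star)$ comes from the pointwise identity $\inf_{Q\in\mathcal Q}E_Q[U(X)]=\inf_{Q\in\mathcal Q_e}E_Q[U(X)]$ of \Cref{Lemma5}, for which your convex-combination trick $Q_\varepsilon=(1-\varepsilon)Q+\varepsilon Q_0$ together with the finiteness assumptions is exactly the standard argument; and $(5\star)$ is the concavification identity $\sup_X E_Q[U(X)]=\sup_X E_Q[U_c(X)]$ for each fixed $Q\in\mathcal Q_e$ (\Cref{Lemma6}). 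So the architecture is the same as in the paper, which in turn defers the substantive lemmas to \cite{Bul2019}.

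There is one genuine, though repairable, gap. For $(2\star)$ you propose to apply the Schied--Wu minimax theorem to $U_c$ ``directly''. The paper explicitly warns that this cannot be done: \cite{Schied-Wu} assume a utility that is strictly increasing, strictly concave and satisfies the Inada conditions at $0$ and at $\infty$, whereas $U_c$ is neither strictly concave nor Inada at $0$. One must instead go back to the lop-sided minimax theorem of Aubin and Ekeland \cite[Chapter 6, p.~295]{Aubin84} and verify its hypotheses for the affine, weakly lower-semicontinuous functional $Z\mapsto\mathbb E[ZU_c(X)]$ on the weakly compact convex set $\mathcal Z$ and the concave functional $X\mapsto\mathbb E[ZU_c(X)]$; this is what \cite[Lemma~1]{Bul2019} does. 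A second, smaller point concerns $(5\star)$: your randomisation asks for a set of conditional probability $\lambda(X)$ given $X$ \emph{simultaneously} under $Q^e$ and $Q$. This is stronger than what Reichlin's construction provides (he matches averages on each affine piece of $U_c$ and controls the budget through a monotone coupling with $\varphi=dQ^e/dQ$); it is achievable fibrewise by Lyapunov convexity, but the measurable selection across fibres is precisely the hard part, and one must also handle the case of a final affine piece on which $U<U_c$ up to $+\infty$, where $b(X)=\infty$ and a two-point randomisation is unavailable. You correctly identify the measurability as the crux; it is the content of \cite[Theorem~5.1]{Reichlin11} and \cite[Lemma~3]{Bul2019}, to which the paper's proof of $(5\star)$ simply defers.
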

\normalsize

The proof of this theorem will be divided into several parts.

\subsection{Minimax identity for the concavified objective function $U_c(x)$}

Now we are going to show that minimax identity holds for $U_c(x).$ 

There is a lot of literature with proofs of the minimax identity for the robust utility functionals, the most general case was considered in the \cite{Schied-Wu}. However, there the authors assume that the utility function is strictly increasing, strictly concave, and satisfies the Inada conditions both in point 0 and in  $\infty.$

The function $U_c(\cdot)$ which we are considering is no longer strictly concave and does not satisfy the Inada conditions in 0, hence we can not use all the previous results without changes. 

The next lemma is almost the same as \cite[Lemma 3.4]{Schied-Wu}.

\begin{lemma}\label[lemma]{Lemma4}%\hypertarget{lemma4}{}
	Suppose that  \Cref{ass1} and \Cref{finit}  hold.
	%\begin{gather}
	%\widetilde{u}_{Q_0}(x)<\infty \text{ for some } x>0 \text{ and some } Q_0\in \mathcal{Q}_e. \label{(7)}
	%\end{gather}
	
	Then, we have 
	\begin{gather*}
	u^{c}(x)=\sup\limits_{X \in \mathcal{X}(x)}\inf\limits_{Q \in \mathcal{Q}}E_{Q}[U_c(X)]=
	\inf\limits_{Q \in \mathcal{Q}}\sup\limits_{X \in \mathcal{X}(x)}E_{Q}[U_c(X)] \\%\label{(19)} \\ 
	=\sup\limits_{X \in \mathcal{X}(x)}\inf\limits_{Q \in \mathcal{Q}_e}E_{Q}[U_c(X)]=
	\inf\limits_{Q \in \mathcal{Q}_e}\sup\limits_{X \in \mathcal{X}(x)}E_{Q}[U_c(X)] %\label{(20)}
	\end{gather*}
\end{lemma}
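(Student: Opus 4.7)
The plan is to follow the proof of \cite[Lemma 3.4]{Schied-Wu} closely and verify that the weaker hypotheses on $U_c$ (concave, non-decreasing, upper-semicontinuous, no Inada at $0$, not strictly concave) still suffice. The central object is the map $(X,Q)\mapsto E_Q[U_c(X)]$ on $\mathcal{X}(x)\times\mathcal{Q}$, which is concave in $X$ by concavity of $U_c$ and linearity of expectation, and affine in $Q$; this is exactly the saddle-point structure required by a standard minimax theorem. Strict concavity and the Inada conditions were used in \cite{Schied-Wu} only for uniqueness of the maximizer afterwards, not for the minimax identity itself, so dropping them should not break the argument.

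Next I would endow $\mathcal{Z}$ with an appropriate weak topology: since $E_{\mathbb{P}}[Z]=1$ for every $Z\in\mathcal{Z}$, the set $\mathcal{Z}$ is norm-bounded in $L^1(\mathbb{P})$, and together with convexity and $L^0$-closedness from \Cref{ass1}(iii) this delivers the compactness needed in the minimax step. After invoking \Cref{finit} to keep the relevant expectations away from $+\infty$, I apply the minimax theorem cited by the authors (see \cite[Section 6]{Aubin84}) to obtain
\[
\sup_{X\in\mathcal{X}(x)}\inf_{Q\in\mathcal{Q}}E_Q[U_c(X)]=\inf_{Q\in\mathcal{Q}}\sup_{X\in\mathcal{X}(x)}E_Q[U_c(X)],
\]
and then repeat the same argument with $\mathcal{Q}_e$ in place of $\mathcal{Q}$, using \Cref{ass1}(iv), to obtain the analogous identity with $\mathcal{Q}_e$. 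To collapse the two pairs of values into one, I pass between $\mathcal{Q}$ and $\mathcal{Q}_e$ by a standard perturbation: fix $Q_0\in\mathcal{Q}_e$ (non-empty by \Cref{finit}), set $Q_t=(1-t)Q+tQ_0$ for $Q\in\mathcal{Q}$ and $t\in(0,1]$, which lies in $\mathcal{Q}_e$ by convexity and equivalence, and let $t\downarrow 0$. Dominated convergence together with the mild growth $U_c(x)/x\to 0$ gives $E_{Q_t}[U_c(X)]\to E_Q[U_c(X)]$, enough to squeeze the $\mathcal{Q}_e$-infimum between the $\mathcal{Q}$-infima and identify all four quantities.

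The main technical obstacle is verifying the upper semicontinuity of $X\mapsto E_Q[U_c(X)]$ in the topology used for the minimax theorem, since without Inada at $0$ the usual continuity/differentiability arguments from \cite{Schied-Wu} are unavailable. I would handle this by a truncation $U_c^{(n)}:=U_c\wedge n$ combined with Fatou's lemma, exploiting $U_c(x)/x\to 0$ to get uniform integrability of $U_c(X)^+$ uniformly over $X\in\mathcal{X}(x)$ (the budget constraint $E_{Q^e}[X]\le x$ caps linear-growth expectations). Because only the minimax identity is needed and not the existence of an optimizer, the relaxed regularity of $U_c$ should ultimately not obstruct the proof.
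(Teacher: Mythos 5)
Your overall route is the paper's route: the paper's proof (deferred to \cite[Lemma 1]{Bul2019}, and reproduced in the constrained setting in the proof of \Cref{ucW}) also applies the lop-sided minimax theorem of \cite[Chapter 6]{Aubin84} to the affine functional $Z\mapsto \mathbb{E}[ZU_c(X)]$ on the weakly compact convex set $\mathcal{Z}$, and then reduces $\mathcal{Q}$ to $\mathcal{Q}_e$ exactly as you do, via the convex perturbation $Q_t=(1-t)Q+tQ_0$ with $Q_0\in\mathcal{Q}_e$ (this is \cite[Lemma 3.3]{Schied-Wu}; note that $E_{Q_t}[U_c(X)]=(1-t)E_Q[U_c(X)]+tE_{Q_0}[U_c(X)]$ is affine in $t$, so no dominated convergence is needed, only $u^c_{Q_0}(x)<\infty$ from \Cref{finit}). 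Your observation that one only needs the minimax identity over $\mathcal{Q}$ plus the squeeze $\inf_{\mathcal{Q}_e}\sup\le\inf_{\mathcal{Q}}\sup$ to collapse all four quantities is correct, and makes the separate minimax step over $\mathcal{Q}_e$ (and hence \Cref{ass1}(iv)) redundant here.

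There is, however, one concrete gap. As the remark following the lemma stresses, $U$ is only defined on $(0,\infty)$, so $U_c(0+)$ may equal $-\infty$ while $X\in\mathcal{X}(x)$ may vanish on a set of positive measure. The semicontinuity problem this creates is not the one you address: your truncation $U_c\wedge n$ plus the growth condition controls the \emph{positive} part of $U_c(X)$ (needed for finiteness and for upper semicontinuity in $X$), but the minimax theorem also requires $Z\mapsto \mathbb{E}[ZU_c(X)]$ to be weakly \emph{lower} semicontinuous on $\mathcal{Z}$, and that fails when $U_c(X)^-$ is unbounded: $Z\mapsto \mathbb{E}[ZU_c(X)^-]=\sup_n \mathbb{E}[Z(U_c(X)^-\wedge n)]$ is only lower semicontinuous, whereas upper semicontinuity of it is what the difference needs. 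The paper's proof circumvents this by first replacing $X$ with $X+\varepsilon$ (so that $U_c(X+\varepsilon)\ge U_c(\varepsilon)>-\infty$ and the functional becomes weakly continuous in $Z$), obtaining $u^c(x+\varepsilon)\ge \inf_{\mathcal{Q}}\sup_{X}E_Q[U_c(X)]\ge u^c(x)$, and then letting $\varepsilon\downarrow 0$ using the continuity of the concave value function $u^c$ on $(0,\infty)$ — which in turn requires first proving that $u^c$ is concave. Your sketch is complete only under the additional hypothesis $U_c(0+)>-\infty$; to cover the stated generality you need this $\varepsilon$-shift (or an equivalent device) and the concavity/continuity of $u^c$.
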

\begin{remark*}
	This lemma holds if we will consider utility function $U:(0,\infty)\rightarrow \mathbb{R}$ instead of $U:[0,\infty)\rightarrow \mathbb{R}$.
	Thus, we present the proof for a more general case.
\end{remark*}
\begin{proof}
	The proof can be found in \cite[Lemma 1]{Bul2019}.
\end{proof}

\subsection{Minimax identity for the objective function $U(x)$}

In this section, we want to prove lemmas which will help us to complete the proof of the \autoref{mt}.
\begin{remark*}
	Note, that the main argument in the proof of minimax identity for the robust utility maximization problem is the lop sided minimax theorem by Aubin and  Ekeland, see \cite[Chapter 6, p. 295]{Aubin84}, which holds if the
	functional $X \rightarrow \mathbb{E}[ZU(X)]$ is concave. Since we consider the non-concave utility function $U,$ we can not prove the minimax identity in this case similarly. A more general case of the minimax identity was proved by Maurice Sion, see \cite{[Sion]}. However, to use Sion's minimax theorem we still need 
	functional $X \rightarrow \mathbb{E}[ZU(X)]$ to be quasi-concave, which is not true, in the general case, even for the indicator functions multiplied  by the constants.
\end{remark*}
%We did not prove the minimax identity in the general case of non-concave functions but we establish some conditions under which it will hold.
\begin{lemma}\label[lemma]{Lemma5}
	If \Cref{ass1} and \Cref{finiteu} hold,
	%In addition to the \hyperlink{ass1}{Assumptions 3.1} let us assume that
	%\begin{gather}
	%\exists Q_0 \in \mathcal{Q}_e \: \forall g \in C(x) : E_{Q_0}[\widehat{U}(g)] < \infty.
	%\end{gather}
	then for all $X \in \mathcal{X}(x)$ 
	\begin{gather}
	\inf\limits_{Q \in \mathcal{Q}}E_Q[U(X)] = 	\inf\limits_{Q \in \mathcal{Q}_e}E_Q[U(X)].
	\label{6star}
	\end{gather}
\end{lemma}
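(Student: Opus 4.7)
The plan is to show the non-trivial inequality
$$\inf_{Q \in \mathcal{Q}} E_Q[U(X)] \geq \inf_{Q \in \mathcal{Q}_e} E_Q[U(X)],$$
since the reverse direction is immediate from the inclusion $\mathcal{Q}_e \subseteq \mathcal{Q}$. To do this, I would approximate an arbitrary $Q \in \mathcal{Q}$ by elements of $\mathcal{Q}_e$, using the convexity of $\mathcal{Q}$ together with the fact that, by \Cref{finiteu}, there exists a reference measure $Q_0 \in \mathcal{Q}_e$ with $u_{Q_0}(x) < \infty$, so in particular $E_{Q_0}[U(X)] < \infty$ for every $X \in \mathcal{X}(x)$.

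The key construction is the convex combination $Q_n := (1 - 1/n) Q + (1/n) Q_0$ for $n \geq 1$. Convexity of $\mathcal{Q}$ (\Cref{ass1}(i)) gives $Q_n \in \mathcal{Q}$, and the bound
$$\frac{dQ_n}{d\mathbb{P}} \geq \frac{1}{n}\, \frac{dQ_0}{d\mathbb{P}} > 0 \quad \mathbb{P}\text{-a.s.}$$
combined with \Cref{ass1}(ii) shows $Q_n \sim \mathbb{P}$, whence $Q_n \in \mathcal{Q}_e$. Linearity of expectation then yields
$$E_{Q_n}[U(X)] = \left(1 - \tfrac{1}{n}\right) E_Q[U(X)] + \tfrac{1}{n} E_{Q_0}[U(X)].$$
Since $E_{Q_0}[U(X)]$ is finite while $E_Q[U(X)] \in (-\infty, +\infty]$ is well defined (because $U$ is non-decreasing with $U(0+) > -\infty$, so $U(X)$ is bounded below), passing $n \to \infty$ gives $\lim_n E_{Q_n}[U(X)] = E_Q[U(X)]$, handling the case $E_Q[U(X)] = +\infty$ trivially since each $E_{Q_n}[U(X)]$ is then also $+\infty$. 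Consequently
$$\inf_{Q' \in \mathcal{Q}_e} E_{Q'}[U(X)] \leq \liminf_{n \to \infty} E_{Q_n}[U(X)] = E_Q[U(X)],$$
and taking the infimum over $Q \in \mathcal{Q}$ on the right closes the inequality.

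The main obstacle, and the only subtle step, is the verification that the convex combination $Q_n$ indeed lies in $\mathcal{Q}_e$ — specifically, that $\mathbb{P} \ll Q_n$ on top of the already granted $Q_n \ll \mathbb{P}$. This is where the choice $Q_0 \in \mathcal{Q}_e$ (rather than $Q_0 \in \mathcal{Q}$) is crucial: it is the strict positivity $dQ_0/d\mathbb{P} > 0$ $\mathbb{P}$-a.s.\ that propagates to $Q_n$ and forces the equivalence. A secondary, purely bookkeeping issue is ensuring that $E_{Q_0}[U(X)]$ is finite and $U(X)$ is bounded below so the arithmetic in the limit $n \to \infty$ does not produce an indeterminate form; both are immediate from the standing hypotheses on $U$ and from \Cref{finiteu}.
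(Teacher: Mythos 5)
Your proof is correct and follows the standard route for this identity: the nontrivial inequality is obtained by approximating $Q\in\mathcal{Q}$ with the convex combinations $Q_n=(1-\tfrac1n)Q+\tfrac1n Q_0$, which lie in $\mathcal{Q}_e$ by convexity of $\mathcal{Q}$, \Cref{ass1}(ii), and the strict positivity of $dQ_0/d\mathbb{P}$, and then letting $n\to\infty$ using the finiteness of $E_{Q_0}[U(X)]$ guaranteed by \Cref{finiteu}. This is essentially the same argument as the one the paper cites (the proof of Lemma~2 in the referenced work, itself modeled on Lemma~3.4 of Schied and Wu), so nothing further is needed.
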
	

\begin{proof}
	The proof can be found in \cite[Lemma 2]{Bul2019}.
\end{proof}

\begin{remark}%\label[remark]{remarklemma5}
	The above lemma also holds for $U_c$ in place of $U$.
\end{remark}

%For the next lemmas of this subsection, additionally to \hyperlink{ass1}{Assumption 3.1} we assume that there is a unique equivalent local martingale measure and that the probability space $(\Omega, \mathcal{F}, \mathbb{P})$ is atomless.

The prove of equality \hyperlink{star}{$(5\star)$} is based on \cite[Theorem 5.1]{Reichlin11}. 

\begin{lemma}\label[lemma]{Lemma6}%\hypertarget{lemma6}{}
	Suppose that $(\Omega, \mathcal{F}, \mathbb{P})$ is atomless. 
	
	Then it holds that
	\begin{gather*}
	\inf\limits_{Q \in \mathcal{Q}_e}\sup\limits_{X \in \mathcal{X}(x)}E_Q[U(X)] = \inf\limits_{Q \in \mathcal{Q}_e}\sup\limits_{X \in \mathcal{X}(x)}E_Q[U_c(X)].
	\label{infsup}
	\end{gather*}
\end{lemma}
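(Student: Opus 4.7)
The easy direction is $\leq$, which follows at once from the pointwise inequality $U \leq U_c$ by integrating and then taking $\sup_X$ and $\inf_Q$. The plan for the reverse is to prove the stronger pointwise identity
\[
\sup_{X \in \mathcal{X}(x)} E_Q[U(X)] = \sup_{X \in \mathcal{X}(x)} E_Q[U_c(X)] \qquad \text{for every } Q \in \mathcal{Q}_e,
\]
after which taking $\inf_{Q \in \mathcal{Q}_e}$ on both sides yields the claim. For a single $Q$ this is essentially the content of \cite[Theorem 5.1]{Reichlin11}, so the task is to adapt that argument to the present setting.

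The mechanism is a \emph{randomisation} of $X$ exploiting that $(\Omega,\mathcal{F},\mathbb{P})$ is atomless. Given $X \in \mathcal{X}(x)$ and $\epsilon > 0$, I would first approximate $X$ by a simple function $\tilde X = \sum_{i=1}^n y_i\,\mathbf{1}_{A_i}$ satisfying $E_{Q^e}[\tilde X] \leq x$ and $|E_Q[U_c(\tilde X)] - E_Q[U_c(X)]| < \epsilon/2$; the mild growth of $U$ combined with \Cref{finit} makes such an approximation admissible. For each level $y_i$, use the definition of the concave envelope (together with Carath\'eodory's theorem in $\mathbb{R}$) to pick $\lambda_i \in [0,1]$ and $y_i^1, y_i^2 \geq 0$ with $\lambda_i y_i^1 + (1-\lambda_i)y_i^2 = y_i$ and
\[
\lambda_i U(y_i^1) + (1-\lambda_i)U(y_i^2) \geq U_c(y_i) - \epsilon/(2n).
\]

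The crucial step is to split each $A_i$ into two measurable pieces compatible \emph{simultaneously} with $Q^e$ (for the budget) and with $Q$ (for the $U$-expectation). Since $\mathbb{P}$ is atomless and both $Q^e, Q$ are equivalent to $\mathbb{P}$, the $\mathbb{R}^2$-valued measure $B \mapsto (Q^e(B),Q(B))$ is atomless on $A_i$, and Lyapunov's theorem on the convexity of the range of finite-dimensional atomless vector measures produces $A_i^1 \subset A_i$ with $Q^e(A_i^1) = \lambda_i Q^e(A_i)$ and $Q(A_i^1) = \lambda_i Q(A_i)$. Setting $X^\epsilon := y_i^1$ on $A_i^1$ and $y_i^2$ on $A_i \setminus A_i^1$, a direct computation gives $E_{Q^e}[X^\epsilon] = E_{Q^e}[\tilde X] \leq x$, so $X^\epsilon \in \mathcal{X}(x)$, while $E_Q[U(X^\epsilon)] \geq E_Q[U_c(\tilde X)] - \epsilon/2 \geq E_Q[U_c(X)] - \epsilon$, which is exactly the inequality needed.

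The main obstacle I anticipate is the simple-function approximation step, since $\mathcal{X}(x)$ contains unbounded $X$: one must truncate on large sets and verify that both the $Q^e$-budget and $E_Q[U_c(X)]$ survive in the limit, using the growth condition $U(x)/x \to 0$ together with \Cref{finit} to control tails uniformly. A secondary technicality is that $U_c(y_i)$ is only the \emph{supremum} of convex combinations of $U$-values and may not be attained; this is why the construction above achieves $E_Q[U(X^\epsilon)]$ only within $\epsilon$ of $E_Q[U_c(X)]$, rather than with equality, and care is required when a linear piece of the hypograph of $U_c$ extends to $+\infty$, but there the mild growth condition makes the supporting combination approachable by finite values of $y_i^2$.
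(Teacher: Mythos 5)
Your argument is correct, and it reaches the conclusion by a genuinely different mechanism than the paper's. Both proofs share the same skeleton --- fix $Q\in\mathcal{Q}_e$, prove $\sup_{X\in\mathcal{X}(x)}E_Q[U(X)]=\sup_{X\in\mathcal{X}(x)}E_Q[U_c(X)]$, then take the infimum over $Q$ --- but the paper (via \cite[Lemma 3]{Bul2019}, which follows \cite[Theorem~5.1 and Proposition~5.3]{Reichlin11}; compare \Cref{lemmaprop1} for the constrained analogue) produces for each $X$ an \emph{exact} modification $X^\star$ with $E_Q[U(X^\star)]=E_Q[U_c(X^\star)]=E_Q[U_c(X)]$: on the set where $U(X)<U_c(X)$ it moves $X$ to the endpoints $\alpha,\beta$ of the affine piece of $U_c$, where $U=U_c$, randomizing by means of a uniform variable coupled to the density $dQ^e/dQ$ so that the budget does not increase. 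You instead discretize $X$, represent $U_c(y_i)$ up to $\varepsilon/(2n)$ as a two-point convex combination of values of $U$, and invoke Lyapunov's convexity theorem for the atomless vector measure $B\mapsto(Q^e(B),Q(B))$ to split each level set in the prescribed proportion \emph{simultaneously} under both measures; this yields $E_Q[U(X^\varepsilon)]\ge E_Q[U_c(X)]-\varepsilon$ and hence equality of the suprema in the limit. Your route avoids the measurability analysis of the envelope's contact points and handles affine pieces extending to $+\infty$ by approximation, at the cost of obtaining only an $\varepsilon$-approximate improvement of each fixed $X$ rather than the exact pointwise identity (which the paper needs elsewhere but which is not required for this lemma). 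Two details deserve explicit treatment in a full write-up: take the discretization from below (so that $\tilde X\le X$, making the budget constraint automatic, and $\tilde X\uparrow X$, so that monotone convergence of $E_Q[U_c(\tilde X)]$ holds with no tail estimate, since $U_c\ge 0$ is non-decreasing and continuous); and justify the two-point representation $U_c(y)=\sup\{\lambda U(y^1)+(1-\lambda)U(y^2):\lambda y^1+(1-\lambda)y^2=y\}$, which follows from the structure of the concave envelope established in \cite{Reichlin11} together with the growth condition when the relevant affine piece is unbounded.
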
	

\begin{proof}
	The proof can be found in \cite[Lemma 3]{Bul2019}.
\end{proof}

\begin{proof}[Proof of the \autoref{mt}] \mbox{}\\*
	\begin{itemize}
		\item \hyperlink{star}{$(1\star)$} - \hyperlink{star}{$(3\star)$} follows from the \Cref{Lemma4};
		
		\item \hyperlink{star}{$(4\star)$} follows from the fact that $U_c \geq U$;
		
		\item \hyperlink{star}{$(5\star)$} follows from the \Cref{Lemma6};
		
		\item To obtain \hyperlink{star}{$(6\star)$} we need to take the $\sup\limits_{g \in C(x)}$ of the both sides in the equality \eqref{6star};
		
		\item The inequality \hyperlink{star}{$(7\star)$} follows from the fact that for all $Q \in \mathcal{Q} $ and all $X \in \mathcal{X}(x)$ holds 
		\begin{gather*}
		\inf\limits_{Q \in \mathcal{Q}}E_Q[U(X)] \leq \sup\limits_{X \in \mathcal{X}(x)}E_Q[U(X)].
		\end{gather*}
		
		\item Since $\mathcal{Q}_e  \subseteq \mathcal{Q}, $ the inequality \hyperlink{star}{$(8\star)$} is clear.
	\end{itemize}
\end{proof}
\section{Minimax identity for constrained case of random endowments}\label{bound}%\hypertarget{general}{}

\subsection{Formulation of the problem}%\hypertarget{model}{}

This section is in general similar to \Cref{general}, however, similarly to \cite[Chapter 3]{Fol} we consider a modified constrained counterpart. 

Specifically, we assume that there is an upper bound on the endowment, given by a random variable $W : \Omega \rightarrow (0, +\infty )$.
The set of admissible payoffs is then given by 
\begin{gather*}
\mathcal{X}^W := \{X \in L^{0}(\mathbb{P}) \mid 0 \leq X \leq W\  \mathbb{P}\text{-a.s.}\}
\end{gather*}
We keep all of the assumptions from \Cref{general} on the set of all probability measures $\mathcal{Q}$ and the utility function $U$ intact. For technical reasons we will also assume that $(\Omega,\mathcal F)$ is a standard Borel space, which in particular implies the existence of a regular conditional distribution given $W$. We will require that this conditional distribution is atomless, in other words, that the constraint $W$ leaves a sufficient amount of randomness.
\begin{assump}\label{atomless}
	\begin{enumerate}
		\item $(\Omega,\mathcal F)$ is a standard Borel space. 
		\item There exists a regular conditional distribution given $W$, which is atomless, i.e., there exists a function $P\colon \mathcal F\times (0,\infty)\to [0,\infty)$ such that for all $v>0$, $P(\cdot, v)$ is an atomless probability measure, and for all $A\in\mathcal F$, $P(A,\cdot)$ is a measurable function satisfying $P(A,W) = \mathbb{P}(A\mid W)$ a.s.
	\end{enumerate}
\end{assump}

As in \cite{paper1} for each $k>0$ denote
\begin{gather}
U^{k}(x)= U(x\land k),\ x \geq 0.
\end{gather}

Note that the function $U^{k}$ and its concavification $U^{k}_c$ satisfy all of the assumptions on the utility function $U$ and its concavification $U_c.$
%Note that function $U^{k}$ is also non-decreasing, upper-semicontinuous and satisfying the mild growth condition.
%
%By $U^{k}_c$ we denote the concavification of the function $U^{k}$
Moreover, $U^{k}_c(x)=U^{k}(x),$ for all $x \geq k.$

Our goal is to prove some equalities and inequalities, related to the minimax identity for the robust non-concave utility functionals:
\begin{gather*}
\sup\limits_{ X \in \mathcal{X}^W_x}\inf\limits_{Q \in \mathcal{Q}}E_{Q}[U^{W(\omega)}(X)]=\inf\limits_{Q \in \mathcal{Q}}\sup\limits_{X \in \mathcal{X}^W_x}E_{Q}[U^{W(\omega)}(X)],
\end{gather*}
where the budget set $\mathcal{X}^W_x$ is defined by
\begin{gather*}
\mathcal{X}^W_x := \{X \in L^{1}(Q^e) \mid 0 \leq X \leq W,  E_{Q^e}[X]\leq x\},
\end{gather*}
where $x>0$ is the initial wealth and $Q^{e}$ is the unique equivalent local martingale measure.

Introduce the following notation:
\begin{gather*}
u^W_c(x):=\sup\limits_{X \in \mathcal{X}^W_x}\inf\limits_{Q \in \mathcal{Q}}E_{Q}[U_c^{W(\omega)}(X)];\\
u^W_{Q}(x):=\sup\limits_{X \in \mathcal{X}^W_x}E_{Q}[U^{W(\omega)}(X)];\\
u^W_{c,Q}(x):=\sup\limits_{X \in \mathcal{X}^W_x}E_{Q}[U_c^{W(\omega)}(X)].
\end{gather*} 

\begin{remark*}
	Since $U^k(x)\leq U(x), x\geq 0$ \Cref{finiteu} and \ref{finit} provide the finiteness of the value function above.
\end{remark*}

It is natural to consider only the case where
\begin{gather}
\mathrm{E}_{Q^e}[W] > x, \label{W}
\end{gather}
as otherwise, thanks to the monotonicity of $U$, the optimization problem has a trivial solution $X^* = W$. 

The formulation of the next theorems and lemmas are the same as in \Cref{general}. However, because of the boundness assumption on the endowments the proof of the corresponding statements will be different.

\begin{theorem}\label{mt2}%\hypertarget{theorem main2}{}
	Under Assumptions \ref{ass1},  \ref{finiteu}, \ref{finit}, \ref{atomless},
	we have the following: 	
	
\small
\begin{flushleft}	
	\begin{tabular}{c@{\hskip 0.001cm}c@{}c@{\hskip 0.001cm}c@{}c}\hypertarget{star2}{}
		$\sup\limits_{X \in \mathcal{X}^W_x}\inf\limits_{Q \in\mathcal{Q}_e}E_{Q}[U_c^{W(\omega)}(X)]$ & $\overset{(1\star)}{=}$ & $\sup\limits_{X \in \mathcal{X}^W_x}\inf\limits_{Q \in \mathcal{Q}}E_{Q}[U_c^{W(\omega)}(X)]$ & $\overset{(2\star)}{=}$ & $\inf\limits_{Q \in \mathcal{Q}}\sup\limits_{X \in \mathcal{X}^W_x}E_{Q}[U_c^{W(\omega)}(X)]$\\
		$\vertleq(4\star)$ & & & & $\verteq(3\star)$\\
		$\sup\limits_{X \in \mathcal{X}^W_x}\inf\limits_{Q \in \mathcal{Q}_e}E_{Q}[U^{W(\omega)}(X)] $ & & & &
		$\inf\limits_{Q \in \mathcal{Q}_e}\sup\limits_{X \in \mathcal{X}^W_x}E_{Q}[U_c^{W(\omega)}(X)]$\\
		$\verteq(6\star)$ & & & & 	$\verteq(5\star)$\\
		$\sup\limits_{X \in \mathcal{X}^W_x}\inf\limits_{Q \in \mathcal{Q}}E_{Q}[U^{W(\omega)}(X)]$ & $\overset{(7\star)}{\leq}$ & $\inf\limits_{Q \in \mathcal{Q}}\sup\limits_{X \in \mathcal{X}^W_x}E_{Q}[U^{W(\omega)}(X)]$ & $\overset{(8\star)}{\leq}$ &
		$\inf\limits_{Q \in \mathcal{Q}_e}\sup\limits_{X \in \mathcal{X}^W_x}E_{Q}[U^{W(\omega)}(X)]$
	\end{tabular}
\end{flushleft}	
\end{theorem}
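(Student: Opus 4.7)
The plan is to follow the template of \Cref{mt} and establish each of the eight relations in the diagram of \Cref{mt2}; genuinely new work is required only for \hyperlink{star2}{$(5\star)$}, which is where \Cref{atomless} enters in an essential way.

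The trivial entries carry over unchanged. Inequality \hyperlink{star2}{$(7\star)$} is the generic weak minimax inequality, \hyperlink{star2}{$(8\star)$} uses $\mathcal{Q}_e \subseteq \mathcal{Q}$, and \hyperlink{star2}{$(4\star)$} follows from the pointwise bound $U^{W(\omega)}_c \geq U^{W(\omega)}$.

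For the concavified block \hyperlink{star2}{$(1\star)$}--\hyperlink{star2}{$(3\star)$}, I would reproduce the Aubin--Ekeland-style proof of \Cref{Lemma4} in the new feasible set $\mathcal{X}_x^W$. Since $\mathcal{X}_x^W$ is convex and dominated by $W$, it is uniformly integrable and weakly compact in $L^1(Q^e)$ by the Dunford--Pettis theorem; the functional $X \mapsto E_Q[U_c^{W(\omega)}(X)]$ is pointwise concave and upper-semicontinuous in $X$, and the hypotheses on $\mathcal{Q}$ in \Cref{ass1} are unchanged, so the lop-sided minimax theorem applies exactly as in \Cref{Lemma4}. For the analog of \Cref{Lemma5}, which after taking $\sup_{X \in \mathcal{X}_x^W}$ yields \hyperlink{star2}{$(6\star)$}, the convex-combination device $Q_n = (1 - n^{-1})Q + n^{-1} Q_0$ with $Q_0 \in \mathcal{Q}_e$ transfers verbatim, since $U^{W(\omega)}(X) = U(X) \leq U(W)$ is $Q_0$-integrable thanks to \Cref{finiteu} and the bound $X \leq W$, allowing dominated convergence.

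The substantive step is \hyperlink{star2}{$(5\star)$}, the analog of \Cref{Lemma6}. The inequality $\leq$ is immediate from $U^{W(\omega)} \leq U^{W(\omega)}_c$. For the reverse inequality, fix $Q \in \mathcal{Q}_e$ and $X \in \mathcal{X}_x^W$, and for each $\omega$ make a measurable selection of $0 \leq a(\omega) \leq X(\omega) \leq b(\omega) \leq W(\omega)$ and $\lambda(\omega) \in [0,1]$ realizing the pointwise concavification, so that $\lambda a + (1-\lambda) b = X$ and $\lambda U^{W(\omega)}(a) + (1-\lambda) U^{W(\omega)}(b) = U^{W(\omega)}_c(X)$. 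Using the atomless regular conditional distribution $P(\cdot, W)$ supplied by \Cref{atomless}, I would construct a measurable set $A \subset \Omega$ with prescribed conditional mass, and define $X' := a \mathbf{1}_A + b \mathbf{1}_{A^c}$; by construction $X' \in [0,W]$. The main obstacle, and the hardest part of the proof, is to choose $A$ so that simultaneously $E_Q[U^{W(\omega)}(X') \mid W] = U^{W(\omega)}_c(X)$ and $E_{Q^e}[X' \mid W] = E_{Q^e}[X \mid W]$, the latter securing $E_{Q^e}[X'] \leq x$. Because the conditional distributions given $W$ under $Q$ and $Q^e$ need not coincide, producing such an $A$ requires a Lyapunov-type simultaneous matching on the atomless conditional fibers with respect to the two densities $dQ/d\mathbb{P}$ and $dQ^e/d\mathbb{P}$. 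A preliminary truncation via the utilities $U^k(x)=U(x\wedge k)$ introduced at the start of this section would be used to secure integrability, after which one passes to a limit in $k$.
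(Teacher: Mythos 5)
Your outline of the diagram is right, and you have correctly located the crux: \hyperlink{star2}{$(5\star)$} is the only place where \Cref{atomless} is really used. But at exactly that point your proposal stops short of a proof. The paper's route is \Cref{U=Uc2} $\leftarrow$ \Cref{U=Uc} $\leftarrow$ \Cref{lemmaprop1}: for each $Q\in\mathcal Q_e$ and $X\in\mathcal X^W_x$ one builds $X^\star\in\mathcal X^W_x$ with $E_Q[U^W(X^\star)]=E_Q[U_c^W(X^\star)]=E_Q[U_c^W(X)]$, by passing to the regular conditional distribution $P_Q(\cdot,v)$ under $Q$ given $W$, applying Reichlin's two-point randomization $\{\alpha,\beta\}$ fiberwise, and controlling the $Q^e$-cost through an anticomonotone coupling with $\varphi=dQ^e/dQ$ (so only the inequality $E_{Q^e}[X^\star]\le E_{Q^e}[X]$ is needed, not an exact match of conditional expectations under a second measure). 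The genuinely hard content is not the existence of a suitable set on each fiber --- which your Lyapunov-type argument would indeed deliver for fixed $v$ --- but the \emph{joint measurability in $(v,\omega)$} of the resulting selection: this is what the entire appendix (\Cref{semicont}, \Cref{uniformrv}, \Cref{measurability}, i.e.\ measurability of the concavification endpoints $a(v,y)$, $b(v,y)$, a measurable family of uniform randomizers, and a measurable threshold $\sigma(v,\omega)$) is devoted to. Your proposal names this as ``the main obstacle, and the hardest part of the proof'' and then does not carry it out, so the central step remains a gap. Two further slips: your formulation asks for $E_Q[U^W(X')\mid W]=U_c^W(X)$, which cannot hold since the right-hand side is not $\sigma(W)$-measurable (you presumably mean equality of conditional expectations); and the closing truncation-in-$k$ limit is spurious, since the objective is already $U^{W(\omega)}(X)=U(X\wedge W)=U(X)$ on $\mathcal X^W_x$.

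Two smaller inaccuracies elsewhere. For \hyperlink{star2}{$(1\star)$}--\hyperlink{star2}{$(3\star)$} you invoke Dunford--Pettis to claim $\mathcal X^W_x$ is weakly compact because it is dominated by $W$; this fails when $W\notin L^1(Q^e)$ (which is the interesting case, cf.\ \eqref{W}), and it is also unnecessary --- the lop-sided minimax theorem needs compactness only on the $\mathcal Z$ side, and the paper's \Cref{ucW} instead uses the shift $Y=(X+\varepsilon)\wedge W$ together with concavity and hence continuity of $u^W_c$ (\Cref{prop}) to let $\varepsilon\downarrow 0$. For \hyperlink{star2}{$(6\star)$}, the bound $U(X)\le U(W)$ buys you nothing since $E_{Q_0}[U(W)]$ may be infinite; the correct observation is simply that $\mathcal X^W_x\subset\mathcal X(x)$, so $E_{Q_0}[U(X)]\le u_{Q_0}(x)<\infty$ by \Cref{finiteu}, and the convex-combination argument of \Cref{lemma5_2} goes through as in the unconstrained case.
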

\normalsize

The proof of this theorem will be divided into several parts.

\subsection{Minimax identity for the concavified objective function $U_c^{W(\omega)}(x)$}

Now we are going to show that minimax identity holds for $U_c^{W(\omega)}(x).$ First we prove some useful properties.
\begin{lemma}\label[lemma]{prop}%\hypertarget{lemma4}{}
	\begin{itemize}
		\item[a)] Set $\mathcal{X}^W_{x}$ is convex.
		\item[b)]  It  holds that $u^W_{c}(x)= \sup\limits_{X \in \mathcal{X}^W_x}\inf\limits_{Q \in \mathcal{Q}}E_{Q}[U_c^{W(\omega)}(X)]$ is concave. 
	\end{itemize}
\end{lemma}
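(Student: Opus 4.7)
\textbf{Part (a).} The plan is a direct verification from the definition of $\mathcal{X}^W_x$. Take $X_1,X_2\in\mathcal{X}^W_x$ and $\lambda\in[0,1]$, set $X_\lambda:=\lambda X_1+(1-\lambda)X_2$, and check the three defining conditions. Pointwise, $0\le X_\lambda\le \lambda W+(1-\lambda)W=W$, so in particular $X_\lambda\in L^{1}(Q^e)$. Linearity of $\mathrm{E}_{Q^e}$ gives $\mathrm{E}_{Q^e}[X_\lambda]\le\lambda x+(1-\lambda)x=x$. Hence $X_\lambda\in\mathcal{X}^W_x$.

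\textbf{Part (b).} The key observation is that, for every fixed $\omega$, the function $x\mapsto U_c^{W(\omega)}(x)$ is concave by construction (it is the concave envelope of $U^{W(\omega)}$). Consequently, for every $Q\in\mathcal{Q}$ the functional $X\mapsto \mathrm{E}_Q\bigl[U_c^{W(\omega)}(X)\bigr]$ is concave in $X$, and taking the infimum of these concave functionals over $Q\in\mathcal{Q}$ preserves concavity of $X\mapsto \inf_{Q\in\mathcal{Q}}\mathrm{E}_Q\bigl[U_c^{W(\omega)}(X)\bigr]$.

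The remaining step is to transfer concavity in the argument $X$ to concavity of $u^W_c$ in the wealth parameter $x$. Fix $x_1,x_2>0$, $\lambda\in[0,1]$, and set $x_\lambda:=\lambda x_1+(1-\lambda)x_2$. For arbitrary $X_1\in\mathcal{X}^W_{x_1}$ and $X_2\in\mathcal{X}^W_{x_2}$, part (a) (applied with the same argument but to the two different budget levels) shows $X_\lambda:=\lambda X_1+(1-\lambda)X_2\in\mathcal{X}^W_{x_\lambda}$. Using pointwise concavity of $U_c^{W(\omega)}$ and linearity of $\mathrm{E}_Q$,
\begin{gather*}
\mathrm{E}_Q\bigl[U_c^{W(\omega)}(X_\lambda)\bigr]\ge \lambda\,\mathrm{E}_Q\bigl[U_c^{W(\omega)}(X_1)\bigr]+(1-\lambda)\,\mathrm{E}_Q\bigl[U_c^{W(\omega)}(X_2)\bigr].
\end{gather*}
Taking $\inf_{Q\in\mathcal{Q}}$ on both sides and using the elementary inequality $\inf(f+g)\ge \inf f+\inf g$ gives
\begin{gather*}
\inf_{Q\in\mathcal{Q}}\mathrm{E}_Q\bigl[U_c^{W(\omega)}(X_\lambda)\bigr]\ge \lambda\,\inf_{Q\in\mathcal{Q}}\mathrm{E}_Q\bigl[U_c^{W(\omega)}(X_1)\bigr]+(1-\lambda)\,\inf_{Q\in\mathcal{Q}}\mathrm{E}_Q\bigl[U_c^{W(\omega)}(X_2)\bigr].
\end{gather*}
Since the left-hand side is bounded above by $u^W_c(x_\lambda)$, taking $\sup_{X_1\in\mathcal{X}^W_{x_1}}$ and $\sup_{X_2\in\mathcal{X}^W_{x_2}}$ on the right produces $u^W_c(x_\lambda)\ge \lambda u^W_c(x_1)+(1-\lambda)u^W_c(x_2)$, which is the desired concavity.

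I do not anticipate any real obstacle: the only subtle point is remembering that the superadditivity of $\inf$ (rather than its subadditivity) is what is needed, and that concavity of $U_c^{W(\omega)}$ in its argument is free from the definition of the concave envelope. No measurability or integrability concerns arise beyond the standing assumptions, since everything is controlled pointwise by $W$.
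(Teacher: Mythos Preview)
Your proof is correct and follows essentially the same route as the paper: in part (a) you verify convexity directly (the paper actually phrases it with two different budget levels $x_1,x_2$, which is precisely the version you invoke in part (b)), and in part (b) both you and the paper use pointwise concavity of $U_c^{W(\omega)}$, superadditivity of the infimum, and the inclusion $\lambda X_1+(1-\lambda)X_2\in\mathcal{X}^W_{\lambda x_1+(1-\lambda)x_2}$ to pass to the supremum.
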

\begin{proof}
	\begin{itemize}
		\item [a)] Consider $X_1 \in \mathcal{X}^W_{x_1}$ and $X_2 \in \mathcal{X}^W_{x_{2}}$ for some $x_1, x_2 > 0$ and $\alpha \in (0,1).$ One has that $0\leq \alpha X_1+(1-\alpha)X_2 \leq W$ and $\mathrm{E}_{Q^e}[\alpha X_1+(1-\alpha)X_2] \leq \alpha x_1+(1-\alpha)x_2$. Hence, $\alpha X_1+(1-\alpha)X_2 \in \mathcal{X}^W_{\alpha x_1+(1-\alpha)x_2}.$
		\item [b)] Take $X_1 \in \mathcal{X}^W_{x_1}$ and $X_2 \in \mathcal{X}^W_{x_{2}}$ for some $x_1, x_2 > 0$ and $\alpha \in (0,1).$ 
		
		Then, noting that $\{\alpha X_1+(1-\alpha)X_2|X_1 \in \mathcal{X}^W_{x_1}, X_2 \in \mathcal{X}^W_{x_2}\}\subset \mathcal{X}^W_{\alpha x_1+(1-\alpha)x_2}$ one has
	\small
	\begin{flushleft}
		\begin{gather*}
		u^W_{c}(\alpha x_1+(1-\alpha)x_2)=\sup\limits_{X \in \mathcal{X}^W_{\alpha x_1+(1-\alpha)x_2}}\inf\limits_{Q \in \mathcal{Q}}E_{Q}[U_c^{W(\omega)}(X)] \\
		\geq
		\sup\limits_{\alpha X_1+(1-\alpha)X_2|X_1 \in \mathcal{X}^W_{x_1}, X_2 \in \mathcal{X}^W_{x_2}}\inf\limits_{Q \in \mathcal{Q}}E_{Q}[U_c^{W(\omega)}(\alpha X_1+(1-\alpha)X_2)]\\
		\geq 
		\sup\limits_{\alpha X_1+(1-\alpha)X_2|X_1 \in \mathcal{X}^W_{x_1}, X_2 \in \mathcal{X}^W_{x_2}}\inf\limits_{Q \in \mathcal{Q}}E_{Q}[\alpha U_c^{W(\omega)}(X_1)+(1-\alpha)U_c^{W(\omega)}(X_2)]\\
		\geq 
		\sup\limits_{\alpha X_1+(1-\alpha)X_2|X_1 \in \mathcal{X}^W_{x_1}, X_2 \in \mathcal{X}^W_{x_2}}[\alpha\inf\limits_{Q \in \mathcal{Q}}E_{Q} [U_c^{W(\omega)}(X_1)]+(1-\alpha)\inf\limits_{Q \in \mathcal{Q}}E_{Q}[U_c^{W(\omega)}(X_2)]]\\
		=\alpha \sup\limits_{X_1 \in \mathcal{X}^W_{x_1}}\inf\limits_{Q \in \mathcal{Q}}E_{Q}[U_c^{W(\omega)}(X_1)]+ (1-\alpha) \sup\limits_{X_2 \in \mathcal{X}^W_{x_2}}\inf\limits_{Q \in \mathcal{Q}}E_{Q}[U_c^{W(\omega)}(X_2)]\\
		=\alpha u^W_{c}(x_1)+ (1-\alpha) u^W_{c}(x_2).	\qedhere
		\end{gather*}
		\normalsize
	\end{flushleft}
	\end{itemize}
	
\end{proof}

\begin{lemma}\label[lemma]{ucW}%\hypertarget{lemma4}{}
	Suppose that  \Cref{ass1} and \Cref{finit}  hold.
	%\begin{gather}
	%\widetilde{u}_{Q_0}(x)<\infty \text{ for some } x>0 \text{ and some } Q_0\in \mathcal{Q}_e. \label{(7)}
	%\end{gather}
	
	Then, we have 
	\begin{gather*}
	u^W_{c}(x)=\sup\limits_{X \in \mathcal{X}^W_x}\inf\limits_{Q \in \mathcal{Q}}E_{Q}[U_c^{W(\omega)}(X)]=
	\inf\limits_{Q \in \mathcal{Q}}\sup\limits_{X \in \mathcal{X}^W_x}E_{Q}[U_c^{W(\omega)}(X)] \\%\label{(19)} \\ 
	=\sup\limits_{X \in \mathcal{X}^W_x}\inf\limits_{Q \in \mathcal{Q}_e}E_{Q}[U_c^{W(\omega)}(X)]=
	\inf\limits_{Q \in \mathcal{Q}_e}\sup\limits_{X \in \mathcal{X}^W_x}E_{Q}[U_c^{W(\omega)}(X)]. %\label{(20)}
	\end{gather*}
\end{lemma}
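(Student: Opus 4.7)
The plan is to mirror the proof strategy of \Cref{Lemma4} (from Bul2019) but with the upper bound $X\le W$ playing the role that the mild growth condition played in the unconstrained case. Because $U_c$ is concave and non-decreasing, for each fixed $\omega$ the map $x\mapsto U_c^{W(\omega)}(x)=U_c(x\land W(\omega))$ is still concave in $x$; consequently $X\mapsto \mathrm{E}_Q[U_c^{W(\omega)}(X)]$ is concave on the convex admissible set $\mathcal{X}^W_x$ (the convexity is exactly \Cref{prop}(a)), while $Q\mapsto \mathrm{E}_Q[U_c^{W(\omega)}(X)]$ is affine on the convex set $\mathcal{Q}$. This concave--convex bilinear structure is what the lopsided minimax theorem of Aubin--Ekeland requires.

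First I would establish the central identity
\[
\sup_{X\in\mathcal{X}^W_x}\inf_{Q\in\mathcal{Q}}\mathrm{E}_{Q}[U_c^{W(\omega)}(X)] = \inf_{Q\in\mathcal{Q}}\sup_{X\in\mathcal{X}^W_x}\mathrm{E}_{Q}[U_c^{W(\omega)}(X)].
\]
Identifying $\mathcal{Q}$ with the closed convex subset $\mathcal{Z}\subset L^{0}(\mathbb{P})$ (Assumption~\ref{ass1}(iii)), the bilinear functional $F(X,Z)=\mathrm{E}[Z\,U_c^{W(\omega)}(X)]$ is concave in $X$, affine in $Z$, upper-semicontinuous in $X$ and lower-semicontinuous in $Z$ for the natural topologies (convergence in probability, respectively $L^{0}(\mathbb{P})$ convergence). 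Crucially, the bound $0\le X\le W$ gives the uniform domination $U_c^{W(\omega)}(X)\le U_c(W)$; by \Cref{finit} this bound is $Q_0$-integrable for some $Q_0\in\mathcal{Q}_e$, so dominated convergence delivers the required semicontinuity without any truncation or growth argument.

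The remaining task is to replace $\mathcal{Q}$ by $\mathcal{Q}_e$ on either side. For any $Q\in\mathcal{Q}$, pick $Q_0\in\mathcal{Q}_e$ (whose existence is built into \Cref{finit}) and set $Q_n=(1-1/n)Q+(1/n)Q_0$; by convexity each $Q_n\in\mathcal{Q}$, and $Q_n\sim\mathbb{P}$, so $Q_n\in\mathcal{Q}_e$. Since $U_c^{W(\omega)}(X)$ is dominated by the integrable envelope $U_c(W)$, dominated convergence gives $\mathrm{E}_{Q_n}[U_c^{W(\omega)}(X)]\to \mathrm{E}_{Q}[U_c^{W(\omega)}(X)]$, showing $\inf_{\mathcal{Q}_e}\le\inf_{\mathcal{Q}}$ for each fixed $X$; the reverse is trivial from $\mathcal{Q}_e\subseteq\mathcal{Q}$. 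Taking suprema over $X$ yields the $\sup\inf$ equality, and applying the same convex combination trick to the outer infimum, together with the already-established $\sup\inf=\inf\sup$ and the closedness of $\mathcal{Z}_e$ (Assumption~\ref{ass1}(iv)), gives the $\inf\sup$ version.

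The main obstacle I expect is the careful verification of the topological hypotheses of the minimax theorem: choosing a topology on $\mathcal{X}^W_x$ (convergence in probability is natural, and under it $\mathcal{X}^W_x$ is bounded in the $W$-weighted sense and convex) under which $X\mapsto\mathrm{E}_Q[U_c^{W(\omega)}(X)]$ is upper-semicontinuous uniformly in $Q$ enough to pass the infimum inside. The boundedness $X\le W$ makes this considerably cleaner than the unconstrained setting, since every limit passage is handled by dominated convergence with the single integrable dominant $U_c(W)$; no growth estimate is invoked. The non-trivial bookkeeping lies in confirming that the semicontinuity and convexity requirements are simultaneously met on the chosen topology, and that the convex-combination density argument really produces the required limits in $\mathcal{Q}_e$.
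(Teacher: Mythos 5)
Your overall strategy (lopsided minimax on $\mathcal{X}^W_x\times\mathcal{Z}$, then passing between $\mathcal{Q}$ and $\mathcal{Q}_e$ via convex combinations with a fixed $Q_0\in\mathcal{Q}_e$) is the same as the paper's, and the second half is essentially fine: the step $\inf_{\mathcal{Q}}=\inf_{\mathcal{Q}_e}$ needs only linearity of $Q\mapsto E_Q[\cdot]$ together with the finiteness coming from \Cref{finit} (this is the paper's appeal to \cite[Lemma 3.3]{Schied-Wu}), not dominated convergence. The genuine gap is in the first half, where you justify the hypotheses of the minimax theorem by ``uniform domination $U_c^{W(\omega)}(X)\le U_c(W)$, which is $Q_0$-integrable by \Cref{finit}.'' Neither half of that claim holds. \Cref{finit} bounds $\sup_{X\in\mathcal{X}(x)}E_{Q_0}[U_c(X)]$, and $W$ does not belong to $\mathcal{X}(x)$ in the only interesting regime $E_{Q^e}[W]>x$ (indeed $E_{Q^e}[W]$ may be infinite), so $E_{Q_0}[U_c(W)]$ can be $+\infty$. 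More importantly, an \emph{upper} dominant is the wrong direction: what the lopsided minimax theorem requires is lower semicontinuity of $Z\mapsto \mathbb{E}[Z\,U_c^{W}(X)]$ on the weakly compact set $\mathcal{Z}$, and the obstruction there is the negative part of $U_c$ near $0$ (the lemma is asserted for $U$ defined on $(0,\infty)$, where $U_c(0+)$ may be $-\infty$). The bound $X\le W$ gives you nothing from below.

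This is exactly why the paper does not apply the minimax theorem on $\mathcal{X}^W_x$ directly: it first replaces each $X$ by $Y=(X+\varepsilon)\wedge W$, applies the minimax identity on the shifted class $Y^W_{X,\varepsilon}\subset\mathcal{X}^W_{x+\varepsilon}$, obtains the sandwich $u^W_c(x+\varepsilon)\ge \inf_Q\sup_X E_Q[U_c^{W}(X)]\ge u^W_c(x)$, and then sends $\varepsilon\downarrow 0$ using the continuity of $u^W_c$ as a concave function on $(0,\infty)$ --- which is precisely what \Cref{prop}(b) is for, a lemma your argument never invokes. To repair your proof you would either need to add a hypothesis such as $U$ bounded below (or defined and finite at $0$), or reinstate an $\varepsilon$-regularization of this kind together with the concavity/continuity of $u^W_c$ to remove the $\varepsilon$ at the end.
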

%\begin{remark*}
%	This lemma holds if we will consider utility function $U:(0,\infty)\rightarrow \mathbb{R}$ instead of $U:[0,\infty)\rightarrow \mathbb{R}$.
%	Thus, we present the proof for a more general case.
%\end{remark*}
\begin{proof}
	Take $\varepsilon>0.$ Consider $Y:=(X+\varepsilon)\wedge W,$ for $X \in \mathcal{X}^W_x.$ Then $Y \in \mathcal{X}^W_{x+\varepsilon}$, since $0 \leq Y \leq W$ and $\mathrm{E}_{Q^e}[Y]=\mathrm{E}_{Q^e}[(X+\varepsilon)\wedge W]\leq \mathrm{E}_{Q^e}[X+\varepsilon] \leq x+\varepsilon$ . 
	
	Define $Y^W_{X,\varepsilon}:=\{Y \in L^{1}(Q^e) \mid Y=(X+\varepsilon)\wedge W, X \in \mathcal{X}^W_x\}.$ Then $Y^W_{X,\varepsilon} \subset \mathcal{X}^W_{x+\varepsilon}.$
	Thus, it holds
	\begin{gather*}
	u^W_{c}(x+\varepsilon)= \sup\limits_{\bar{X} \in \mathcal{X}^W_{x+\varepsilon}}\inf\limits_{Q \in \mathcal{Q}}E_{Q}[U_c^{W(\omega)}(\bar{X})]\\
	\geq \sup\limits_{Y \in Y^W_{X,\varepsilon}}\inf\limits_{Q \in \mathcal{Q}}E_{Q}[U_c^{W(\omega)}(Y)] 
	=\sup\limits_{Y \in Y^W_{X,\varepsilon}}\inf\limits_{Q \in \mathcal{Q}}\mathbb{E}\left[U_c^{W(\omega)}(Y)\cdot \frac{dQ}{dP}\right]\\ 
	=\sup\limits_{Y \in Y^W_{X,\varepsilon}}\inf\limits_{Z \in \mathcal{Z}}\mathbb{E}[ZU_c^{W(\omega)}(Y)]. 
	\end{gather*}
	In the proof of \cite[Lemma 1]{Bul2019} is already shown that for each $X \in \mathcal{X}(x),$ the map $Z\mapsto \mathbb{E}[ZU_c^{W(\omega)}(Y)]$ is  a weakly lower semicontinuous affine functional defined on the weakly compact convex set $\mathcal{Z}.$
	
	Moreover, in the proof of \cite[Lemma 1]{Bul2019} is already shown
	that for each $Z\in \mathcal{Z}, X \rightarrow \mathbb{E}[ZU_c^{W(\omega)}(X+\varepsilon)]$ is a concave functional.
	Hence, one has that for each $Z\in \mathcal{Z}, X \rightarrow \mathbb{E}[ZU_c^{W(\omega)}(Y)]$ is a concave functional defined on the convex set $\mathcal{X}^W_{x}.$
	
	Noting that from the almost sure convergence follows weak convergence, the conditions of the lop sided minimax theorem \cite[Chapter 6, p. 295]{Aubin84} are satisfied, and so
	\begin{gather*}
	\sup\limits_{Y \in Y^W_{X,\varepsilon}}\min\limits_{Z \in \mathcal{Z}}\mathbb{E}[ZU_c^{W(\omega)}(Y)]=
	\min\limits_{Z \in \mathcal{Z}}\sup\limits_{Y \in Y^W_{X,\varepsilon}}\mathbb{E}[ZU_c^{W(\omega)}(Y)].
	\end{gather*}
	
	Hence, we arrive at
	\begin{gather*}
	u^W_{c}(x+\varepsilon)
	\geq \sup\limits_{Y \in Y^W_{X,\varepsilon}}\inf\limits_{Z \in \mathcal{Z}}\mathbb{E}[ZU_c^{W(\omega)}(Y)]
	=\min\limits_{Z \in \mathcal{Z}}\sup\limits_{Y \in Y^W_{X,\varepsilon}}\mathbb{E}[ZU_c^{W(\omega)}(Y)]\\
	\geq
	\inf\limits_{Q \in \mathcal{Q}}\sup\limits_{Y \in Y^W_{X,\varepsilon}}E_{Q}[U_c^{W(\omega)}(Y)]\\
	\overset{Y\geq X}{\geq}
	\inf\limits_{Q \in \mathcal{Q}}\sup\limits_{Y \in Y^W_{X,\varepsilon}}E_{Q}[U_c^{W(\omega)}(X)]
	\geq \sup\limits_{X \in \mathcal{X}^W_x}\inf\limits_{Q \in \mathcal{Q}}E_{Q}[U_c^{W(\omega)}(X)]=u^W_c(x).
	\end{gather*}
	
	The last inequality follows from the fact that for all $Q \in \mathcal{Q}$ and $X \in \mathcal{X}^W_x$
	\begin{gather*}
	\sup\limits_{X \in \mathcal{X}^W_x}E_Q[U_c^{W(\omega)}(X)]\geq \inf\limits_{Q \in \mathcal{Q}}E_Q[U_c^{W(\omega)}(X)].
	\end{gather*} 
	
	Sending $\varepsilon \downarrow 0$ and using the continuity of $u^W_{c},$ as a concave function on set $(0, +\infty)$, we obtain  the first part of the lemma.
	
	From \Cref{finit} and \cite[Lemma 3.3]{Schied-Wu} follows that $u^W_{c}(x)=\inf\limits_{Q \in \mathcal{Q}_e}u^W_{c,Q}(x).$ (the proof is similar to the proof of \cite[Lemma 2]{Bul2019}).
	
	Hence, 
	\begin{gather*}
	u^W_{c}(x)=\inf\limits_{Q \in \mathcal{Q}_e}u^W_{c,Q}(x)=
	\inf\limits_{Q \in \mathcal{Q}_e}\sup\limits_{X \in \mathcal{X}_x^W}E_{Q}[U_c^{W(\omega)}(X)]\\
	\geq \sup\limits_{X \in \mathcal{X}_x^W}\inf\limits_{Q \in \mathcal{Q}_e}E_{Q}[U_c^{W(\omega)}(X)]\geq
	\sup\limits_{X \in \mathcal{X}_x^W}\inf\limits_{Q \in \mathcal{Q}}E_{Q}[U_c^{W(\omega)}(X)]=u^W_{c}(x).
	\end{gather*}
	
	Which concludes the proof.
\end{proof}
\subsection{Minimax identity for the objective function $U(x)$}

In this section, we will establish auxiliary results which will allow us to complete the proof of \autoref{mt2}.

%As in previous subsection we show conditions under which the minimax identity will hold in this case.
\begin{lemma}\label[lemma]{lemma5_2}
	If \Cref{ass1} and \Cref{finiteu} hold,
	%In addition to the \hyperlink{ass1}{Assumptions 3.1} let us assume that
	%\begin{gather}
	%\exists Q_0 \in \mathcal{Q}_e \: \forall g \in C(x) : E_{Q_0}[\widehat{U}(g)] < \infty.
	%\end{gather}
	then for all $X \in \mathcal{X}_x^W$ 
	\begin{gather}
	\inf\limits_{Q \in \mathcal{Q}}E_Q[U^{W}(X)] = 	\inf\limits_{Q \in \mathcal{Q}_e}E_Q[U^W(X)].
	\label{6starW}
	\end{gather}
\end{lemma}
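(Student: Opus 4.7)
The plan is to mirror the strategy of \Cref{Lemma5} (see \cite[Lemma 2]{Bul2019}), adapted to the capped utility $U^{W}$ and the constrained set $\mathcal{X}^W_x$. The trivial direction $\inf_{Q\in\mathcal{Q}}E_Q[U^W(X)]\le\inf_{Q\in\mathcal{Q}_e}E_Q[U^W(X)]$ follows at once from $\mathcal{Q}_e\subseteq\mathcal{Q}$, so only the reverse inequality requires work.

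For the reverse, I would fix $X\in\mathcal{X}^W_x$ and an arbitrary $Q\in\mathcal{Q}$. By \Cref{finiteu} one can select $Q_0\in\mathcal{Q}_e$ with $u_{Q_0}(x)<\infty$, and for $\alpha\in(0,1]$ form the convex combination $Q_\alpha:=\alpha Q_0+(1-\alpha)Q$. Convexity of $\mathcal{Q}$ (\Cref{ass1}(i)) yields $Q_\alpha\in\mathcal{Q}$, while the equivalence $Q_0\sim\mathbb{P}$ together with $Q\ll\mathbb{P}$ (from \Cref{ass1}(ii)) makes the mixture $Q_\alpha$ equivalent to $\mathbb{P}$, hence $Q_\alpha\in\mathcal{Q}_e$. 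Because $0\le X\le W$ one has $X\wedge W=X$, so $U^{W(\omega)}(X(\omega))=U(X(\omega))$ pointwise, and the inclusion $\mathcal{X}^W_x\subseteq\mathcal{X}(x)$ together with \Cref{finiteu} yields $E_{Q_0}[U^W(X)]=E_{Q_0}[U(X)]\le u_{Q_0}(x)<\infty$.

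Linearity of expectation then gives
\[
E_{Q_\alpha}[U^W(X)]=\alpha\,E_{Q_0}[U^W(X)]+(1-\alpha)\,E_Q[U^W(X)],
\]
so $E_{Q_\alpha}[U^W(X)]\to E_Q[U^W(X)]$ as $\alpha\downarrow 0$ (valid whether the right-hand side is finite or equals $-\infty$, since the $Q_0$-term is finite). Because each $Q_\alpha\in\mathcal{Q}_e$, this shows $\inf_{Q'\in\mathcal{Q}_e}E_{Q'}[U^W(X)]\le E_Q[U^W(X)]$, and taking the infimum over $Q\in\mathcal{Q}$ delivers the desired equality. The only point to monitor is the integrability of $U^W(X)$ under the reference measure $Q_0$, which is what guarantees that the convex-combination limit is well defined in $[-\infty,\infty)$; the truncation $U^W\le U$ together with the inclusion $\mathcal{X}^W_x\subseteq\mathcal{X}(x)$ reduce this to \Cref{finiteu}, so no obstacle specific to the constrained setting arises.
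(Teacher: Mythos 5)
Your argument is correct and is essentially the paper's own: the paper simply refers to the unconstrained case (\cite[Lemma 2]{Bul2019}), whose proof is exactly this convex-combination device $Q_\alpha=\alpha Q_0+(1-\alpha)Q\in\mathcal{Q}_e$ with $\alpha\downarrow 0$, using \Cref{finiteu} to control the $Q_0$-term. Your added observation that $U^{W}(X)=U(X)$ on $\mathcal{X}^W_x\subseteq\mathcal{X}(x)$ is precisely why the unconstrained proof transfers verbatim.
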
	

\begin{proof}
	The proof is the same as in a non-constrained case. See \cite[Lemma 2]{Bul2019}.
\end{proof}

\begin{lemma}\label[lemma]{lemmaprop1}\hypertarget{lemmaprop1}{}	
	Let $\{P_v, v\in (0,\infty)\}$ be a family of atomless probability measures on a standard Borel space $(\Omega, \mathcal F)$, such that for any $A\in \mathcal F$, $P_\cdot(A)$ is measurable. 
	Then, for all $Q \in \mathcal{Q}_e$, for all $X \in \mathcal{X}^W_x$ there exists $X^{\star} \in \mathcal{X}^W_x$ such that
	\begin{gather}%\label{iksstar}
	E_Q[U^{W}(X^{\star})] = E_Q[U_c^{W}(X^{\star})]=E_Q[U_c^{W}(X)]\geq E_Q[U^{W}(X)].
	\label{eqprop1}
	\end{gather}
\end{lemma}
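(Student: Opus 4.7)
The last inequality $E_Q[U^W(X)] \le E_Q[U_c^W(X)]$ is immediate from $U^v \le U_c^v$. To obtain the two equalities, my plan is to construct $X^\star$ by pulling the values of $X$, on each fibre $\{W=v\}$, to the endpoints of the concavification intervals of $U_c^v$, using the atomlessness of $P_v$ to split each such fibre without disturbing either the $Q$-expected utility of $U_c^W$ or the $Q^e$-expected cost.

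For each $v>0$, the set $\{u>0 : U_c^v(u)>U^v(u)\}$ is an at most countable disjoint union of open intervals $(a_k(v),b_k(v))$ on each of which $U_c^v$ is affine, interpolating linearly between $U^v(a_k(v))$ and $U^v(b_k(v))$; a routine measurable selection argument (using joint measurability of $(u,v)\mapsto U^v(u)$ and $(u,v)\mapsto U_c^v(u)$) makes $v\mapsto a_k(v),b_k(v)$ Borel measurable. Writing $\lambda_k(u,v) := (b_k(v)-u)/(b_k(v)-a_k(v))$, $B_k := \{\omega : X(\omega)\in(a_k(W(\omega)),b_k(W(\omega)))\}$ and $G := \Omega\setminus\bigcup_k B_k$, I would set $X^\star := X$ on $G$ and $X^\star := a_k(W)\mathbf{1}_{B_k^a}+b_k(W)\mathbf{1}_{B_k\setminus B_k^a}$ on each $B_k$, for measurable subsets $B_k^a\subseteq B_k$ to be chosen.

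The defining property I would impose on $B_k^a$ is that, for a.e.\ $v$, $Q(B_k^a\cap\{W=v\}\mid W=v) = \int_{B_k\cap\{W=v\}}\lambda_k(X,v)\,dQ|_{W=v}$ and the analogous identity with $Q^e$ in place of $Q$. Since $P_v$ is atomless and both $Q|_{W=v}$ and $Q^e|_{W=v}$ are equivalent to $P_v$, they are themselves atomless; Lyapunov's convexity theorem applied to the vector measure $A\mapsto(Q(A\mid W=v),Q^e(A\mid W=v))$ on subsets of $B_k\cap\{W=v\}$ gives a convex range containing the target point $\int_{B_k\cap\{W=v\}}\lambda_k(X,v)\,d(Q,Q^e)|_{W=v}$, hence the existence of such a $B_k^a\cap\{W=v\}$. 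The pointwise identities $\lambda_k a_k(W)+(1-\lambda_k)b_k(W)=X$ and $\lambda_k U^W(a_k(W))+(1-\lambda_k)U^W(b_k(W))=U_c^W(X)$ on $B_k$ then give, by integration and summation in $k$, $E_{Q^e}[X^\star]=E_{Q^e}[X]\le x$ (so $X^\star\in\mathcal{X}^W_x$) and $E_Q[U^W(X^\star)]=E_Q[U_c^W(X)]$. Since $X^\star(\omega)$ never falls inside an open concavification interval, one also has $U^W(X^\star)=U_c^W(X^\star)$ pointwise, which closes the chain of equalities.

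I expect the main obstacle to be the \emph{joint} measurable selection of the fibre-wise splits $B_k^a\cap\{W=v\}$: Lyapunov's theorem delivers existence for each fixed $v$, but assembling them into a single measurable $B_k^a\subseteq\Omega$ requires a parametric (measurable) version of Lyapunov's theorem or a direct construction exploiting the standard Borel structure of $(\Omega,\mathcal F)$. This is precisely the step at which the full strength of \Cref{atomless} is used; the remaining bookkeeping is a routine integration plus a countable summation over $k$.
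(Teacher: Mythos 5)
Your overall strategy --- condition on $W$ and, on each fibre $\{W=v\}$, push the values of $X$ lying inside an open concavification interval of $U^v_c$ out to the interval's endpoints, in proportions that preserve the conditional $U_c$-utility and keep the conditional $Q^e$-cost under control --- is exactly the strategy of the paper, which implements it through \Cref{measurability}. The one genuinely different ingredient is how you fix the proportions: you match both the $Q$- and the $Q^e$-conditional masses exactly via the bang-bang form of Lyapunov's convexity theorem applied to the vector measure $A\mapsto\bigl(Q(A\mid W=v),\,Q^e(A\mid W=v)\bigr)$, whereas the paper follows Reichlin's Proposition~5.3 and randomizes using a uniform variable $\zeta(v,\omega)$ coupled monotonically to the pricing density $\varphi=dQ^e/dQ$ (\Cref{uniformrv}), which matches the $Q$-conditional utility exactly and yields only $E_{Q^e}[X^\star]\le E_{Q^e}[X]$ --- all that is needed for $X^\star\in\mathcal X^W_x$. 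For each fixed $v$ both devices are legitimate, and your Lyapunov argument is correct as stated: the target point is the integral of the $[0,1]$-valued function $\lambda_k(X,v)$ against the atomless vector measure, hence lies in its range.

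The problem is that the obstacle you flag in your last paragraph is not a loose end but the entire content of the proof. Lyapunov's theorem gives, for each $v$, \emph{some} set $B_k^a\cap\{W=v\}$ with no uniqueness and no canonical choice, so there is no a priori way to glue these fibre-wise sets into a single $\mathcal F$-measurable $B_k^a\subseteq\Omega$; without that, $X^\star$ is not even a random variable and none of the expectations in \eqref{eqprop1} are defined. The paper resolves precisely this point by an explicit, jointly measurable construction: measurability in $(v,y)$ of the interval endpoints is proved in \Cref{semicont} (via a monotonicity argument, and note that the paper indexes the interval by the point $y=X(\omega)$ itself rather than by an enumeration $k$, which sidesteps the additional measurability issue of choosing an enumeration of the intervals consistently across $v$); a jointly measurable uniform variable $\zeta(v,\omega)$ is built in \Cref{uniformrv} by quantile transforms; and the splitting threshold $\sigma(v,\omega)$ is exhibited as the zero of an explicitly measurable function $f(s,v,\omega)$ in \Cref{measurability}. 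A parametric measurable-selection version of Lyapunov's theorem could in principle replace this, but it is a nontrivial result that you neither state nor prove, so as written the proposal is a correct plan with its central technical step missing.
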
	

\begin{proof}
	The main idea of the proof is to utilize the ideas of \cite[Proposition 5.3]{Reichlin11} in our conditional setting. 
	
	Fix $Q\in \mathcal{Q}_e$ and define $\psi = dQ^e/d\mathbb P$, $\varphi = dQ^e/dQ$. First of all, note that for any $Q\in \mathcal{Q}_e$, there exists a corresponding regular conditional probability given $W$. Indeed, since $\psi$ is positive, $M(v) :=  \int_{\Omega} \psi\, P(d\omega,v)$ is positive as well so
	$$
	P_{Q}(A,v) = \frac{\int_{A} \psi\, P(d\omega,v)}{M(v)}, A\in\mathcal{F},
	$$
	is a probability measure. It is easy to see that it is measurable in $v$ and $P_Q(A,W) = Q(A\mid W)$ \ $Q$-a.s.

	By \Cref{measurability} applied to $Y(x,\omega) = X(\omega)$, $\phi(v,\omega) = \varphi(\omega)$ and $P_v(A) = P_Q(A,v)$, there exists a jointly measurable function $Y^\star(v,\omega)$ such that for all $v>0$, $E_{P_Q(\cdot, v)}[Y^\star(v,\omega)\varphi(\omega)]\le E_{P_Q(\cdot,v)}[X(\omega)\varphi(\omega)]$ and
	\begin{gather*}
	E_{P_Q(\cdot,v)}[U^{v}\big(Y^{\star}(v,\omega)\big)]= E_{P_Q(\cdot,v)}[U_c^{v}\big(Y^{\star}(v,\omega)\big)]=E_{P_Q(\cdot,v)}[U_c^{v}\big(X(\omega)\big)]. 
	\end{gather*}
	Set $X^\star(\omega) = Y^\star(W(\omega),\omega)$. Then
	\begin{gather*}
	E_{Q^e}\left[X^{\star}\right]
	=E_{Q}\left[Y^{\star}\big(W(\omega),\omega\big)\varphi\right]
	=E_{Q}\left[E_{Q}\left[Y^{\star}\big(W(\omega),\omega\big)\varphi\mid W\right]\right] \\
	=E_{Q}\left[E_{P_Q(\cdot, v)}[Y^\star(v,\omega)\varphi(\omega)] \big|_{v=W}\right]\le E_{Q}\left[E_{P_Q(\cdot, v)}[X(\omega)\varphi(\omega)] \big|_{v=W}\right] \\
	= E_{Q}\left[E_{Q}\left[X(\omega)\varphi\mid W\right]\right] = E_{Q}\left[X(\omega)\varphi\right] = E_{Q^e}[X]\le x,
	\end{gather*}
	so $X^{\star} \in \mathcal{X}_x^W$. Further,
	\begin{gather*}
	E_Q\left[U_c^{W}(X^{\star})\right]
	=E_{Q}\left[U_c^{W(\omega)}\big(Y^{\star}(W(\omega),\omega)\big)\right]
	\\=E_{Q}\left[E_{Q}\left[U_c^{W(\omega)}\big(Y^{\star}(W(\omega),\omega)\big)\mid W\right]\right] \\
	=E_{Q}\left[E_{P_Q(\cdot, v)}[U_c^{v}\big(Y^{\star}(v,\omega)\big)] \big|_{v=W}\right]= E_{Q}\left[E_{P_Q(\cdot, v)}[U_c^{v}\big(X(\omega)\big)]\big|_{v=W}\right] \\
	= E_{Q}\left[E_{Q}\left[U_c^{v}\big(X(\omega)\big)\mid W\right]\right] = E_{Q}\left[U_c^{v}\big(X(\omega)\big)\right].
	\end{gather*}
	The equality $E_Q[U^{W}(X^{\star})] = E_Q[U_c^{W}(X^{\star})]$ is proved similarly, and the inequality $E_Q[U_c^{W}(X)]\geq E_Q[U^{W}(X)]$ is obvious, since $U_c^W\ge U^W$. The proof is now complete. 
\end{proof}

\begin{lemma}\label[lemma]{U=Uc}\hypertarget{lemma}{}
	If \Cref{atomless} holds, then for all $Q \in \mathcal{Q}_e$ it holds that
	\begin{gather*}
	\sup\limits_{X \in \mathcal{X}_x^W}E_Q[U^{W(\omega)}(X)] = \sup\limits_{X \in \mathcal{X}_x^W}E_Q[U_c^{W(\omega)}(X)], \text{ for all } x>0. %\label{reich1}
	\end{gather*}
\end{lemma}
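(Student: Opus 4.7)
The plan is to derive \Cref{U=Uc} as an almost immediate consequence of \Cref{lemmaprop1}, together with the trivial inequality $U^{W(\omega)}\le U_c^{W(\omega)}$. Note that \Cref{atomless} is precisely what is needed so that the family $\{P(\cdot,v): v\in(0,\infty)\}$ of atomless regular conditional distributions given $W$ exists on the standard Borel space $(\Omega,\mathcal F)$, which is the hypothesis of \Cref{lemmaprop1}.

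First, the inequality
\[
\sup_{X \in \mathcal{X}_x^W} E_Q[U^{W(\omega)}(X)] \le \sup_{X \in \mathcal{X}_x^W} E_Q[U_c^{W(\omega)}(X)]
\]
is clear because $U^{v}(y)\le U_c^{v}(y)$ for every $v>0$ and $y\ge 0$, so the integrand on the left is pointwise dominated by the integrand on the right for each fixed $X$.

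For the reverse inequality, I would fix $Q\in\mathcal{Q}_e$ and an arbitrary $X\in\mathcal{X}_x^W$, and invoke \Cref{lemmaprop1} to produce $X^{\star}\in\mathcal{X}_x^W$ with
\[
E_Q[U^{W(\omega)}(X^\star)] \;=\; E_Q[U_c^{W(\omega)}(X^\star)] \;=\; E_Q[U_c^{W(\omega)}(X)].
\]
The crucial piece here is the middle equality: it says that the value $E_Q[U_c^{W(\omega)}(X)]$ can be reproduced by some admissible payoff $X^\star\in\mathcal{X}_x^W$ evaluated against the \emph{original} (non-concavified) utility. Hence
\[
E_Q[U_c^{W(\omega)}(X)] \;=\; E_Q[U^{W(\omega)}(X^\star)] \;\le\; \sup_{Y\in\mathcal{X}_x^W}E_Q[U^{W(\omega)}(Y)].
\]
Taking the supremum over $X\in\mathcal{X}_x^W$ on the left yields
\[
\sup_{X \in \mathcal{X}_x^W} E_Q[U_c^{W(\omega)}(X)] \;\le\; \sup_{Y \in \mathcal{X}_x^W} E_Q[U^{W(\omega)}(Y)],
\]
which combined with the trivial inequality closes the proof.

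There is no real obstacle to overcome at this stage: all the heavy lifting, namely the construction of $X^\star$ via the conditional measurable selection that turns an admissible payoff $X$ into one on which $U$ and $U_c$ agree in expectation, has already been carried out in \Cref{lemmaprop1}. The content of \Cref{U=Uc} is just that this pointwise (in $X$) construction upgrades to equality of the suprema, and that upgrade is the short argument above.
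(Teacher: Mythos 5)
Your proof is correct and follows essentially the same route as the paper: both deduce the lemma from \Cref{lemmaprop1} by combining the trivial inequality $U^{W}\le U_c^{W}$ with the observation that $E_Q[U_c^{W}(X)]=E_Q[U^{W}(X^\star)]$ is attained by an admissible $X^\star$, and then passing to suprema. If anything, your version is slightly more carefully worded than the paper's "apply the supremum to all parts" phrasing, since you make explicit that $X^\star$ depends on $X$ and only bound $E_Q[U^{W}(X^\star)]$ by the supremum over all admissible payoffs.
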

\begin{proof}
	Apply the $\sup\limits_{X \in \mathcal{X}_x^W}$ to the all parts of \eqref{eqprop1}. Then one has
	\begin{gather}%\label{iksstar}
	\sup\limits_{X \in \mathcal{X}_x^W}E_Q[U^{W(\omega)}(X^{\star})] 
	= \sup\limits_{X \in \mathcal{X}_x^W}E_Q[U_c^{W(\omega)}(X^{\star})] \nonumber\\
	=\sup\limits_{X \in \mathcal{X}_x^W}E_Q[U_c^{W(\omega)}(X)]\geq \sup\limits_{X \in \mathcal{X}_x^W}E_Q[U^{W(\omega)}(X)].
	\label{eqprop2}
	\end{gather}
	Since $Q \in \mathcal{Q}_e$ is arbitrary, $X \in \mathcal{X}^W_x$ is arbitrary and $X^{\star} \in \mathcal{X}^W_x$ it follows that the inequality in \eqref{eqprop2} is an equality and, hence, the statement of the lemma is proven. 
\end{proof}

%where the model setup as well as the set $C(x)$ is the same as in our problem (for the model see \eqref{model}).

\begin{lemma}\label[lemma]{U=Uc2}
	Under the \Cref{atomless}, 
	\begin{gather*}
	\inf\limits_{Q \in \mathcal{Q}_e}\sup\limits_{X \in \mathcal{X}_x^W}E_Q[U^{W(\omega)}(X)] = \inf\limits_{Q \in \mathcal{Q}_e}\sup\limits_{X \in \mathcal{X}_x^W}E_Q[U_c^{W(\omega)}(X)].
	\label{infsup2}
	\end{gather*}
\end{lemma}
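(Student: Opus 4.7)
The plan is to observe that this statement is an immediate consequence of the preceding Lemma \ref{U=Uc}. That lemma already establishes, under \Cref{atomless}, the pointwise-in-$Q$ equality
\begin{gather*}
\sup\limits_{X \in \mathcal{X}_x^W}E_Q[U^{W(\omega)}(X)] = \sup\limits_{X \in \mathcal{X}_x^W}E_Q[U_c^{W(\omega)}(X)]
\end{gather*}
for every $Q \in \mathcal{Q}_e$ and every $x>0$. Since this identity holds for each individual $Q$ in the set over which we are about to take the infimum, the infima of the two sides (as functions of $Q$ on $\mathcal{Q}_e$) must coincide.

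Concretely, I would write: fix $x>0$, apply \Cref{U=Uc} to obtain the above pointwise equality for each $Q\in\mathcal{Q}_e$, and then take $\inf_{Q\in\mathcal{Q}_e}$ of both sides. No further measurability, selection, or minimax argument is required at this step — all the nontrivial work (the conditional concavification via the regular conditional distribution from \Cref{lemmaprop1}, and the passage from the pointwise-in-$\omega$ reconstruction $X^\star$ to the $\sup$-level identity in \Cref{U=Uc}) has already been carried out.

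There is no real obstacle here: the statement is essentially a rewriting of \Cref{U=Uc} after applying $\inf_{Q\in\mathcal{Q}_e}$. The only thing worth flagging in the write-up is that we are legitimately allowed to take the infimum of an equality of (possibly infinite) extended-real-valued functions of $Q$, which is immediate. This lemma will then feed into step $(5\star)$ of \Cref{mt2} in exactly the same way that \Cref{Lemma6} feeds into the corresponding step of \Cref{mt}.
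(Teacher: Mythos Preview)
Your proposal is correct and matches the paper's own proof, which simply states that the lemma follows immediately from \Cref{U=Uc}. Taking $\inf_{Q\in\mathcal{Q}_e}$ of the pointwise-in-$Q$ equality is exactly the argument used.
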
	

\begin{proof}
	Follows immediately from the \Cref{U=Uc}.
\end{proof}

\begin{proof}[Proof of the \autoref{mt2}] \mbox{}\\*
	\begin{itemize}
		\item \hyperlink{star2}{$(1\star)$} - \hyperlink{star2}{$(3\star)$} follows from the \Cref{ucW};
		
		\item \hyperlink{star2}{$(4\star)$} follows from the fact that $U_c^{W(\omega)} \geq U^{W(\omega)}$;
		
		\item \hyperlink{star2}{$(5\star)$} follows from the \Cref{U=Uc2};
		
		\item To obtain \hyperlink{star2}{$(6\star)$} we need to take the $\sup\limits_{X \in \mathcal{X}_x^W}$ of the both sides in the equality \eqref{6starW};
		
		\item The inequality \hyperlink{star2}{$(7\star)$} follows from the fact that for all $Q \in \mathcal{Q} $ and all $X \in \mathcal{X}_x^W$ holds 
		\begin{gather*}
		\inf\limits_{Q \in \mathcal{Q}}E_Q[U^{W(\omega)}(X)] \leq \sup\limits_{X \in \mathcal{X}_x^W}E_Q[U^{W(\omega)}(X)].
		\end{gather*}
		
		\item Since $\mathcal{Q}_e  \subseteq \mathcal{Q}, $ the inequality \hyperlink{star2}{$(8\star)$} is clear.
	\end{itemize}
\end{proof}

\begin{appendix}

\section{Auxiliary statements}
In what follows $U\colon \bbR_+ \to \bbR_+$ is a non-decreasing upper-semicontinuous function satisfying a mild growth condition, $U^v(y) = U(y\wedge v), v>0$, and $U^v_c$ is the concavification of $U^v$. For $v>y>0$, let
$$
a(v,y) = \begin{cases}
\inf\{z\le y: U^v_c(x)> U^v(x)\text{ on }[z,y]\}, &U^v(y)< U^v_c(y),\\
y, & U^v(y) = U^v_c(y)
\end{cases}
$$
and 
$$
b(v,y) = \begin{cases}
\sup\{z\le y: U^v_c(x)> U^v(x)\text{ on }[y,z]\}, &U^v(y)< U^v_c(y),\\
y, & U^v(y) = U^v_c(y)
\end{cases}
$$
be the left and right endpoints of the interval around $y$, in which where $U^v < U^v_c$ (or just $y$ in the case where $U^v(y) = U^v_c(y)$). Observe that $U^v(a(v,y)) = U^v_c(a(v,y))$ and $U^v(b(v,y)) = U^v_c(b(v,y))$: in the case of inequality we would have it in some open interval, contradicting the definition of infimum or supremum.

\begin{lemma}\label{semicont}\hypertarget{semicont}{}
	The functions $a$ and $b$ defined above are measurable. 
\end{lemma}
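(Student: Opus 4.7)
My plan is to approximate $a(v,y)$ and $b(v,y)$ from above/below by suprema/infima over \emph{open} level sets, which are amenable to a countable reduction over rationals. Set $h(v,z):=U^v_c(z)-U^v(z)\ge 0$ and note that $h(v,\cdot)$ is lower-semicontinuous (a continuous function minus an upper-semicontinuous one), so $\{z\ge 0:h(v,z)<\varepsilon\}$ is open for every $\varepsilon>0$; moreover $h(v,\cdot)\equiv 0$ on $[v,\infty)$, since both $U^v$ and $U^v_c$ equal the constant $U(v)$ there.

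First I would establish that $h$ is jointly Borel in $(v,z)$. The map $(v,z)\mapsto U^v(z)=U(v\wedge z)$ is Borel as a composition of a continuous map with $U$. For the concavification I would use the representation
\[
U^v_c(z)\;=\;\inf\bigl\{\alpha z+\beta:\alpha\ge 0,\ \beta\in\bbR,\ \alpha z'+\beta\ge U(z'\wedge v)\text{ for all }z'\ge 0\bigr\},
\]
noting that, because $\alpha\ge 0$, the affine-majorant constraint collapses to ``$\alpha v+\beta\ge U(v)$'' together with ``$\alpha z'+\beta\ge U(z')$ for every $z'\in[0,v]$''; both are Borel conditions in $(v,\alpha,\beta)$, and by density it suffices to run the infimum over $(\alpha,\beta)\in\mathbb{Q}_+\times\mathbb{Q}$, which exhibits $U^v_c$ as a countable infimum of Borel functions.

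Next, for each $\varepsilon>0$ I set
\[
a_\varepsilon(v,y):=\sup\{z\in[0,y]:h(v,z)<\varepsilon\},\qquad b_\varepsilon(v,y):=\inf\{z\in[y,\infty):h(v,z)<\varepsilon\},
\]
and claim that $a_\varepsilon\downarrow a$ and $b_\varepsilon\uparrow b$ as $\varepsilon\downarrow 0$. The inequalities $a_\varepsilon\ge a$ and $b_\varepsilon\le b$ are clear from the definitions. Conversely, if $\lim_\varepsilon a_\varepsilon>a(v,y)$, one can select $z_n\in[0,y]$ bounded away from $a(v,y)$ with $h(v,z_n)\to 0$; a compactness-plus-lower-semicontinuity argument then produces a subsequential limit $z^\star\in(a(v,y),y]$ with $h(v,z^\star)=0$, contradicting the fact that $a(v,y)$ is the largest point in $[0,y]$ where $U^v=U^v_c$. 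The argument for $b$ is symmetric, and the set defining $b_\varepsilon$ is non-empty thanks to $h(v,z)=0$ on $[v,\infty)$.

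Finally, since $\{z\ge 0:h(v,z)<\varepsilon\}$ is open in $\bbR$, its supremum over $[0,y]$ (resp.\ infimum over $[y,\infty)$) coincides with the same operation restricted to rationals, so
\[
a_\varepsilon(v,y)=\sup\{q\in\mathbb{Q}\cap[0,y]:h(v,q)<\varepsilon\},\quad b_\varepsilon(v,y)=\inf\{q\in\mathbb{Q}\cap[y,\infty):h(v,q)<\varepsilon\},
\]
each a countable sup/inf of Borel expressions in $(v,y)$; taking $\varepsilon=1/n\to 0$ realises $a$ and $b$ as pointwise limits of Borel functions, completing the proof. The delicate step is the joint Borel-measurability of $U^v_c$, where the mere upper-semicontinuity of $U$ forces the detour through the affine-envelope representation; everything else is routine once $h$ is known to be jointly measurable.
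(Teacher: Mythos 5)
Your proof is correct in substance, but it takes a genuinely different route from the paper's. The paper never needs the joint measurability of $(v,z)\mapsto U^v_c(z)$: it observes that $a(v,\cdot)$ is non-decreasing and right-continuous, that $a(\cdot,y)$ is monotone in $v$ (because $U^{v_2}_c\ge U^{v_1}_c$ for $v_1<v_2$), and then invokes a small general lemma asserting that a function of two variables which is monotone in each argument and right-continuous in one of them is jointly Borel. You instead work with the level sets of $h=U^v_c-U^v$ directly, which forces you through the joint measurability of the concavification --- correctly identified as the delicate point and handled via the countable affine-majorant representation (legitimate here because $U^v$ is bounded and non-decreasing, so every affine majorant has slope $\alpha\ge 0$, and the infimum of affine majorants agrees with the smallest concave majorant at interior points of the domain). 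The $\varepsilon$-approximation $a_\varepsilon\downarrow a$ via lower semicontinuity of $h(v,\cdot)$, and the reduction to rationals over the open sets $\{z: h(v,z)<\varepsilon\}$, are both sound; note that the two arguments share the same key observation, namely that $h$ vanishes at $a(v,y)$ and $b(v,y)$ and is strictly positive strictly between them and $y$.

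One step is under-justified as written: the claim that ``$\alpha z'+\beta\ge U(z')$ for all $z'\in[0,v]$'' is a Borel condition in $(v,\alpha,\beta)$ does not follow merely from the pointwise Borel-ness of each instance, since the quantifier runs over an uncountable set. The repair is immediate, though: after restricting to rational $(\alpha,\beta)$ you only need the condition to be Borel in $v$ for each fixed rational pair, and the set of admissible $v$ is a down-set of $(0,\infty)$ (if the majorization holds up to $v$ it holds up to any smaller $v$), hence an interval, hence Borel. With that sentence added your argument is complete. As a trade-off, the paper's monotonicity argument is shorter and sidesteps the concavification entirely, whereas yours is more robust: it uses only the semicontinuity of $h$ in $z$ and its joint measurability, so it would survive in situations where the monotonicity of $a$ in $v$ fails.
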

\begin{proof}
	We will show only measurability of $a$, that of $b$ can be shown similarly. 
	
	Note that $a$ is obviosly non-decreasing in $y$. It is also right-continuous in $y$. Indeed, let $y_n\ge y_0$, $y_n\to y_0$, $n\to\infty$. If $U^v(y_0) < U^v_c(y_0)$, then, thanks to continuity of $U^v_c$ and upper-semicontinuity of $U^v$, this inequality holds in an open interval around $y_0$, which means that $a(v,y_n) = a(v,y_0)$ for all $n$ large enough. Otherwise, if $U^v(y_0) = U^v_c(y_0)$, then $a(v,y_n)\in (y_0,y_n]$ for all $n\ge 1$, whence $a(v,y_n)\to y_0 = a(v,y_0), n\to\infty$. 
	
	Further, since for $v_1< v_2$, $U^{v_2}_c$ dominates $U^{v_1}$ on $[0,v_1]$, we have that $U^{v_2}_c \ge U^{v_1}_c$. Consequently, $a$ is non-increasing in $v$. Now the proof follows from the following lemma.
	\begin{lemma}
		Let a function $f\colon (0,\infty)^2\to \bbR$ be such that for each $x> 0$, $f(x,\cdot)$ is non-decreasing and right-continuous, and for each $y> 0$, $f(\cdot,y)$ is non-decreasing. Then, $f$ is measurable.
	\end{lemma}	
	\begin{proof}
		For arbitrary $t\in\bbR$ consider the set $A_t = f^{-1}((-\infty,t))$. Thanks to monotonicity, $(x,y)\in A_t \Rightarrow (x',y') \in A_t$ for all $x'\le x, y'\le y$. Moreover, thanks to right-continuity in $y$, the $x$-sections $A_{t,x} = \{y> 0: (x,y) \in A_t\}$ are open intervals. 
		
		Define $A_{t,x+} = \bigcup_{z>x} A_{t,z}$. We claim that the set $A^o_t := \{(x,y)\in (0,\infty)^2: y\in A_{t,x+}\}$ is open (it is actually the interior of $A_{t}$). Indeed, if $(x,y) \in A_0^t$, then $y\in A_{t,z}$ for some $z>x$. Since $A_{t,z}$ is open, for some $\varepsilon>0$, $(y-\varepsilon, y+\varepsilon) \subset A_{t,z}$. Then, thanks to monotonicity, $(0,z)\times (y-\varepsilon, y+\varepsilon) \subset A_{t}^0$. 
		
		By the definition of $A_t^o$,
		$$
		A_t\setminus A_t^o = \bigcup_{x>0} \{x\}\times (A_{t,x}\setminus A_{t,x+}).
		$$
		For any $x>0$, $A_{t,x}\setminus A_{t,x+}$ is a difference of two open intervals, so it's either a half-open interval or empty. Since the half-open intervals for different $x$ are disjoint, there are at most countable number of then. Therefore, $A_t\setminus A_t^o$ is Borel as a countable union of Borel sets, which finishes the proof.		
	\end{proof}
	\renewcommand{\qedsymbol}{}
\end{proof}

%In the following statements, $(\cV, \Sigma)$ is a measurable space.

\begin{lemma}\label{uniformrv}\hypertarget{uniformrv}{}
	Let $\{P_v, v \in (0,\infty) \}$ be a family of atomless probability measures on a standard Borel space $(\Omega, \mathcal F)$, such that for any $A\in \mathcal F$, $P_\cdot(A)$ is a measurable, and $\xi\colon (0,\infty)\times \Omega\to\bbR$ be measurable. Then, there exist measurable functions $\zeta\colon (0,\infty)\times \Omega \to \mathbb{R}$ and $q\colon (0,\infty)\times \bbR\to \bbR$ such that for all $v\in (0,\infty)$, $\zeta(v,\cdot)$ has a uniform distribution on $(0,1)$ with respect to $P_v$, $q(v,\cdot)$ is non-decreasing, and $q(v,\zeta(v,\omega)) = \xi(v,\omega)$ \ $P_v$-a.s. %?
\end{lemma}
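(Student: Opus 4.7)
The plan is to construct $q$ and $\zeta$ via the classical randomised-quantile representation of $\xi(v,\cdot)$ under $P_v$, making sure every step is also jointly measurable in the parameter $v$.

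First I would define the parametric distribution function $F_v(t):=P_v(\{\omega:\xi(v,\omega)\le t\})$ and its left limit $F_v(t^-):=P_v(\{\omega:\xi(v,\omega)<t\})$. The hypothesis that $v\mapsto P_v(A)$ is measurable for every $A\in\mathcal{F}$ extends, by a standard monotone class argument, to measurability of $v\mapsto\int f(v,\omega)\,P_v(d\omega)$ for any bounded jointly measurable $f$; applying this to $f(v,\omega)=\mathbf{1}_{\{\xi(v,\omega)\le t\}}$ shows $v\mapsto F_v(t)$ is measurable for each $t$. Combined with right-continuity of $F_v$ in $t$ (so that $F_v(t)$ can be approximated by $F_v(\lceil 2^n t\rceil/2^n)$), this yields joint measurability of $(v,t)\mapsto F_v(t)$. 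The quantile function $q(v,u):=\inf\{t:F_v(t)\ge u\}$, which is non-decreasing and left-continuous in $u$, is then jointly measurable by an analogous dyadic approximation.

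The core of the proof is producing an auxiliary variable $V\colon(0,\infty)\times\Omega\to(0,1)$ that is jointly measurable, uniform on $(0,1)$ under every $P_v$, and independent of $\xi(v,\cdot)$ under every $P_v$. Using the Kuratowski isomorphism theorem, identify $(\Omega,\mathcal{F})$ with $((0,1),\mathcal{B}((0,1)))$. For each $v$, since the space is standard Borel there is a regular conditional distribution $P_v(\cdot\mid \xi(v,\cdot)=t)$, and atomlessness of $P_v$ forces this conditional law to be atomless for $F_v$-almost every $t$. Setting $G_v(s,t):=P_v((0,s]\mid\xi(v,\cdot)=t)$, I put $V(v,\omega):=G_v(\omega,\xi(v,\omega))$; the disintegration formula yields uniformity of $V(v,\cdot)$ and its $P_v$-independence from $\xi(v,\cdot)$. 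This step is the main obstacle: it requires joint measurability in $(v,s,t)$ of the family of conditional CDFs $G_v(s,t)$, which I would establish by realising $G_v(s,t)$ on a countable dense set of $s$ as a limit of Radon--Nikodym ratios
\[
G_v(s,t) \;=\; \lim_{h\downarrow 0}\frac{P_v\bigl((0,s]\cap\{|\xi(v,\cdot)-t|\le h\}\bigr)}{P_v\bigl(\{|\xi(v,\cdot)-t|\le h\}\bigr)},
\]
each term of which inherits joint measurability from the measurability assumption on $P_\cdot$ and the joint measurability of $\xi$, with the final extension to arbitrary $s$ given by right-continuity.

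Finally, set
\[
\zeta(v,\omega):=F_v\bigl(\xi(v,\omega)^-\bigr)+V(v,\omega)\bigl(F_v(\xi(v,\omega))-F_v(\xi(v,\omega)^-)\bigr).
\]
The standard randomised-quantile calculation gives that $\zeta(v,\cdot)$ is uniform on $(0,1)$ under $P_v$; and on the atom $\{\xi(v,\cdot)=x\}$ the value of $\zeta$ lies in the interval $\bigl(F_v(x^-),F_v(x)\bigr]$, which $q(v,\cdot)$ maps back to $x$, so that $q(v,\zeta(v,\omega))=\xi(v,\omega)$ holds $P_v$-almost surely. Joint measurability of $\zeta$ is then immediate from the joint measurability of $F$, $\xi$, and $V$.
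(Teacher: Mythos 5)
Your route is genuinely different from the paper's and is essentially workable, but one intermediate claim is false as stated and needs to be repaired. The paper also reduces to $((0,1),\mathcal{B}((0,1)))$ via the Borel isomorphism, but then avoids conditional distributions altogether: it breaks ties \emph{deterministically}, setting $\kappa(v,x) = P_v(\{\xi(v,\cdot)<\xi(v,x)\}) + P_v(\{\omega\le x:\ \xi(v,\omega)=\xi(v,x)\})$, i.e.\ it uses the linear order on $(0,1)$ to rank points within each level set of $\xi(v,\cdot)$. Atomlessness of $P_v$ makes $\kappa(v,\cdot)$ continuously distributed, the continuous case is applied to $\kappa$, and one takes $q(v,\cdot)=q_\xi(v,q_\kappa(v,\cdot))$. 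Joint measurability then rests only on a joint-measurability lemma for parametric CDFs and quantile functions, with no disintegration. You instead use the randomised-quantile (distributional) transform with an auxiliary $V$ extracted from the regular conditional distributions $P_v(\cdot\mid\xi(v,\cdot)=t)$; the joint measurability in $(v,s,t)$ of these conditional CDFs is the heaviest step of your argument and is precisely what the paper's $\kappa$ is designed to sidestep. It can be carried out, but the paper gets the same effect from one explicitly jointly measurable formula.

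The concrete flaw: atomlessness of $P_v$ does \emph{not} force the conditional laws $P_v(\cdot\mid\xi(v,\cdot)=t)$ to be atomless for $F_v$-a.e.\ $t$. Take $P_v$ to be Lebesgue measure on $(0,1)$ and $\xi(v,\omega)=\omega$: every conditional law is a Dirac mass, your $V(v,\omega)=G_v(\omega,\xi(v,\omega))$ is identically $1$, and it is neither uniform nor independent of $\xi(v,\cdot)$. Your construction nevertheless survives, because $V$ enters $\zeta$ only multiplied by the jump $F_v(\xi)-F_v(\xi^-)$, which vanishes off the atoms of the law of $\xi(v,\cdot)$; at an atom $t$ the level set has positive $P_v$-measure, the conditional law there genuinely is the normalised (hence atomless) restriction of $P_v$, and $V$ is conditionally uniform given $\{\xi(v,\cdot)=t\}$, which is all the randomised-quantile computation actually requires. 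So replace the blanket uniformity/independence claim for $V$ by conditional uniformity on the atoms of the law of $\xi(v,\cdot)$, and the argument closes.
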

\begin{proof}
	Since $(\Omega, \cF)$ carries an atomless measure, it is uncountable. Then it is well known that it is isomorphic to $(\bbR, \cB(\bbR))$, i.e. there exists a measurable bijection $\tau\colon \Omega \to \bbR$ such that $\tau^{-1}$ is measurable as well. Therefore we can assume without loss of generality that $(\Omega, \cF) = \big((0,1), \cB((0,1))\big)$. 
	
	Assume first that the distribution of $\xi(v,\omega)$ is continuous for all $v\in (0,\infty)$. The cumulative distribution function $F_\xi(v,x) = P_v(\{\xi(v,\omega)\le x\})$ is jointly measurable (see \eg \cite[Lemma 4.1]{shevchenko}), so the quantile function $q_\xi(v,r) = \inf\{x\in\bbR: F_\xi(v,x)\ge r\}$ is jointly measurable as well. So in this case we can set $\zeta(v,\omega) = F_\xi(v,\xi(v,\omega))$ and $q(v,r) = q_\xi(v,r)$; by the quantile transformation theorem, $\zeta$ and $q$ are as required. 
	
	For general $\xi$, define 
	$$
	\kappa(v,x) = P_v(\{\omega\colon \xi(v,\omega)<x\}) + P_v(\{\omega\le x: \xi(v,\omega) = \xi(v,x)\}), x\in (0,1),
	$$
	which is jointly measurable thanks to \cite[Lemma 4.1]{shevchenko}. It is easy to see that for any $v\in (0,\infty)$, $\kappa$ has continuous distribution under $P_v$, and 
	$$
	q_\xi(v,\kappa(v,\omega)) = \xi(v,\omega),
	$$
	where, as above, $q_\xi$ is the quantile function of $\xi$. Then we can set $\zeta(v,\omega) = F_\kappa(v,\kappa(v,\omega))$ and $q(v,r) = q_\xi(v,q_\kappa(v,r))$, arriving at the desired statement.	
\end{proof}

\begin{lemma}\label{measurability}\hypertarget{measurability}{}
	Let $\{P_v, v\in (0,\infty)\}$ be a family of atomless probability measures on a standard Borel space $(\Omega, \mathcal F)$, such that for any $A\in \mathcal F$, $P_\cdot(A)$ is measurable. Also let $Y,\phi:(0,\infty) \times \Omega\to [0,\infty)$ be jointly measurable functions such that $Y(v,\omega)\le v$ for all $v>0$, $\omega\in \Omega$.% and $E_{P_v}[\psi(v,\omega)]=1$
	Then, there exists a jointly measurable function $Y^\star(v,\omega)$ such that for all $v>0$, $E_{P_v}[Y^\star(v,\omega)\phi(v,\omega)]\le E_{P_v}[Y(v,\omega)\phi(v,\omega)]$ and
	\begin{gather*}
	E_{P_v}[U^{v}\big(Y^{\star}(v,\omega)\big)]= E_{P_v}[U_c^{v}\big(Y^{\star}(v,\omega)\big)]=E_{P_v}[U_c^{v}\big(Y(v,\omega)\big)]. 
	\end{gather*}
\end{lemma}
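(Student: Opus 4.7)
Following Reichlin's single-measure concavification argument (\cite[Proposition~5.3]{Reichlin11}), but carrying every construction out jointly measurably in $v$ by means of the two preceding auxiliary lemmas, I would proceed in three steps.

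\emph{Reshaping the distribution of $Y$.} By \Cref{semicont} the endpoints $A(v,\omega):=a(v,Y(v,\omega))$ and $B(v,\omega):=b(v,Y(v,\omega))$ are jointly measurable with $0\le A\le Y\le B\le v$, $U^v=U^v_c$ at $A$ and at $B$, and $U^v_c$ affine on $[A,B]$. Set the jointly measurable weight $\lambda(v,y):=(b(v,y)-y)/(b(v,y)-a(v,y))$ on $\{a(v,y)<b(v,y)\}$ and $\lambda\equiv 1$ otherwise. Denote by $\mu_v$ the law of $Y(v,\cdot)$ under $P_v$, and define the concavified law
\[
\mu^\star_v(\cdot) := \int \bigl[\lambda(v,y)\,\delta_{a(v,y)}(\cdot) + (1-\lambda(v,y))\,\delta_{b(v,y)}(\cdot)\bigr]\,d\mu_v(y),
\]
which is supported on the coincidence set $\{U^v=U^v_c\}\cap[0,v]$, has the same mean as $\mu_v$, and, by affinity of $U^v_c$ on each $[a(v,y),b(v,y)]$, satisfies $\int U^v\,d\mu^\star_v=\int U^v_c\,d\mu_v$. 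Joint measurability of $v\mapsto\mu^\star_v$ follows from that of $a,b,\lambda$ and of $v\mapsto\mu_v$, by Fubini.

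\emph{Antitone realisation via \Cref{uniformrv}.} Apply \Cref{uniformrv} to $\xi:=\phi$ to obtain a jointly measurable $\zeta_\phi\colon(0,\infty)\times\Omega\to(0,1)$ that, under every $P_v$, is uniform on $(0,1)$ and satisfies $\phi(v,\omega)=q_\phi(v,\zeta_\phi(v,\omega))$ with $q_\phi(v,\cdot)$ non-decreasing. Let $G_v$ be the quantile function of $\mu^\star_v$, which is jointly measurable in $v$, and put
\[
Y^\star(v,\omega):=G_v\bigl(1-\zeta_\phi(v,\omega)\bigr).
\]
Under $P_v$, $Y^\star(v,\cdot)$ has law $\mu^\star_v$, and since $Y^\star$ is a non-increasing function of $\zeta_\phi$ while $\phi$ is a non-decreasing function of $\zeta_\phi$, the pair $(Y^\star(v,\cdot),\phi(v,\cdot))$ is antitonically coupled.

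\emph{Verification and main obstacle.} Because $Y^\star\in\{U^v=U^v_c\}$ pointwise, $U^v(Y^\star)=U^v_c(Y^\star)$ identically; and
\[
E_{P_v}[U^v(Y^\star)] = \int U^v\,d\mu^\star_v = \int U^v_c\,d\mu_v = E_{P_v}[U^v_c(Y)].
\]
For the budget, note that the classical independent randomisation of $Y$ between $A$ and $B$ with weights $\lambda$ (constructed on a possibly enlarged probability space) produces a coupling of $\mu^\star_v$ with the law of $\phi$ whose product expectation equals $E_{P_v}[Y\phi]$ exactly; by the Hardy--Littlewood rearrangement inequality the antitone coupling realised above achieves the \emph{minimum} of this product expectation over all couplings with the given marginals, whence $E_{P_v}[Y^\star\phi]\le E_{P_v}[Y\phi]$. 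The only genuinely delicate point in the whole argument is the joint-in-$v$ measurability of $\mu^\star_v$ and its quantile $G_v$; once that bookkeeping is in place, \Cref{uniformrv} delivers the antitone coupling and the Hardy--Littlewood inequality closes the budget bound without ever requiring an independent randomiser on $\Omega$ itself.
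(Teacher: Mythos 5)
Your construction is correct, and it departs from the paper's proof in the one step that actually matters. Both arguments begin identically: measurable endpoints $a(v,y)$, $b(v,y)$ from \Cref{semicont}, the affine weight $\lambda$, and \Cref{uniformrv} applied to $\phi$ to produce a uniform $\zeta_\phi$ with $\phi=q(v,\zeta_\phi)$, $q(v,\cdot)$ non-decreasing. The paper then follows Reichlin's \cite[Proposition 5.3]{Reichlin11} literally: it keeps $Y^\star=Y$ off the set $S=\{U^v(Y)<U^v_c(Y)\}$ and, \emph{separately within each concavification interval} (the group of $\omega'$ with the same $a(v,Y(v,\omega'))$), solves $f(\sigma,v,\omega)=0$ by an intermediate-value argument to find a threshold in $\zeta_\phi$ at which to split the group between the two endpoints $\alpha$ and $\beta$; measurability of $\sigma$ is then checked by hand. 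You instead push the whole law of $Y(v,\cdot)$ forward to the concavified law $\mu^\star_v$ and realise it in one stroke as the \emph{global} antitone rearrangement $G_v(1-\zeta_\phi)$ against $\phi$, getting the budget bound from Hardy--Littlewood via comparison with the independent-randomisation coupling. What your route buys is a transparent, one-line justification of the inequality $E_{P_v}[Y^\star\phi]\le E_{P_v}[Y\phi]$ --- indeed it makes unmistakable that the lower endpoint must be placed where $\phi$ is \emph{large} (note that the paper's displayed assignment, which puts $\alpha$ on $\{\zeta<\sigma\}$ with $q(v,\cdot)$ non-decreasing, is comonotone within each group and as written would give this inequality in the reverse direction; your antitone coupling is the direction that works). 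What it costs is that $Y^\star$ no longer agrees with $Y$ off $S$ (harmless, since only the three expectations are asserted) and that you must supply the joint measurability of $(v,u)\mapsto G_v(u)$, which you flag but do not carry out; this does close, since $v\mapsto P_v(Y(v,\cdot)\in B)$ is measurable by a monotone-class argument from the assumed measurability of $P_\cdot(A)$, hence so are the distribution function of $\mu^\star_v$ and its quantile. With that bookkeeping written out, your proof is complete and self-contained.
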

\begin{proof}
	We will adapt the construction used in the proof of \cite[Proposition 5.3]{Reichlin11} so that it has the desired measurability property.

	Define
	\begin{gather*} 
	S=\{(v,\omega) \in (0,\infty)\times \Omega: U^v(Y(v,\omega))<U_c^v(Y(v,\omega))\}
	\end{gather*}
	and for $(v,\omega) \in S$, let
	\begin{gather*}
	\alpha(v,\omega):=\inf\{z:U^v(x)<U_c^v(x) \text{ on } (z,X(\omega)]\} \label{a(x,V)}, \\ 
	\beta(v,\omega):=\sup\{z:U^v(x)<U_c^v(x) \text{ on } [X(\omega), z)\}. \label{b(x,V)}
	\end{gather*}
	be the left and the right ends of the interval where $U^v<U_c^v$. These functions are measurable thanks to \Cref{semicont}. 
	
	For $(v,\omega)\in S$, define
	\begin{gather*}
	\lambda(v,\omega) = \frac{\beta(v,\omega) - X(\omega)}{\beta(v,\omega) - \alpha(v,\omega)}
	\end{gather*}
	so that $X(v,\omega) = \lambda(v,\omega) \alpha(v,\omega) + (1-\lambda(v,\omega))\beta(v,\omega)$. Due to \Cref{uniformrv}, there exist measurable functions $\zeta,q\colon (0,\infty)\times \Omega \to \bbR$ such that for all $v>0$, $\phi(v,\omega) = q(v,\zeta(v,\omega)) $ $ P_v$-a.s. and $\zeta(v,\omega)$ is uniformly distributed on $(0,1)$ under $P_v$. 
	For $s\in[0,1], v>0$ and $\omega,\omega'\in\Omega$, define
	\begin{gather*}
	h(s,v,\omega,\omega') = \bbI_{(v,X(\omega))\in S,(v,X(\omega'))\in S, a(v,X(\omega))= a(v,X(\omega'))}\big(\bbI_{\zeta(v,\omega')<s} - \lambda(v,\omega')\big)
	\end{gather*}
	and
	\begin{gather*}
	f(s,v,\omega) = \int_{\Omega} h(v,\omega,\omega',s) P_v(d\omega').
	\end{gather*}
	Since $\lambda(v,\omega)\in (0,1)$ and $\zeta$ has continuous distribution under $P_v$, we have that for all $(v,\omega)\in S$, $f$ is continuous in $s$ and $f(0,v,\omega)<0<f(1,v,\omega)$. Denoting $\sigma(v,\omega) = \inf\{s\in(0,1): f(s,v,\omega)\ge 0\}$, we have $f(\sigma(v,\omega),v,\omega) = 0$. Also for any $s\in (0,1)$, 
	$\{(v,\omega): \sigma(v,\omega)\le s\} = \{(v,\omega): f(s,v,\omega)\ge 0\}$, so $\sigma(v,\omega)$ is measurable. 
	
	Now set
	\begin{gather*}
	Y^{\star}(v,\omega)=
	\begin{cases}
	Y(v, \omega),  &(v,\omega) \notin S; \\
	\alpha(v, \omega),  &(v,\omega) \in S\cap \{\zeta(v,\omega) < \sigma(v,\omega)\}; \\
	\beta(v, \omega), &(v,\omega) \in S\cap \{\zeta(v,\omega) \ge  \sigma(v,\omega)\}. %\label{Ystar}
	\end{cases}
	\end{gather*}
	Since for any fixed $v>0$, the construction coincides with that given in \cite[Proposition 5.3]{Reichlin11}, the rest of the proof follows. 
\end{proof}

\end{appendix}

\begin{acknowledgement}[title={Acknowledgments}]
O. Bahchedjioglou thanks Prof. Dr. Mitja Stadje and Dr. Thai Nguyen for their help and support
during her work on the topic.
\end{acknowledgement}
%
%\begin{funding}
%Dic, quaeso, mihi: quae est ista, quae consurgens ut aurora rutilat
%ut benefi-cam.  Dic nam esta pulchra ut luna electa, ut sol replet
%laetitia terras, caelos.
%\end{funding}

\bibliographystyle{bib/vmsta-mathphys}
\bibliography{bib/biblio}

\end{document}